\documentclass[a4paper,10pt]{iopart}

\usepackage{textcomp}
\usepackage{amsmath}
\usepackage{amsfonts}
\usepackage{amsthm}
\usepackage{amssymb}
\usepackage{color}
\usepackage[mathscr]{eucal}
\usepackage{float} 
\usepackage{enumerate}
\usepackage{graphicx}
\usepackage{cite}
\usepackage{tikz}
\usepackage{fp}

\usepackage{algorithmic}
\usepackage[ruled,section]{algorithm}

\newtheoremstyle{thm}
  {\baselineskip}
  {\baselineskip}
  {\itshape}
  {}
  {\bf}
  {.}
  {.5em}
  {}
  
\newtheoremstyle{others}
  {\baselineskip}
  {\baselineskip}
  {\upshape}
  {}
  {\bf}
  {.}
  {.5em}
  {}

\theoremstyle{thm}
\newtheorem{theorem}{Theorem}[section]
\newtheorem{proposition}[theorem]{Proposition}

\theoremstyle{others}
\newtheorem{definition}[theorem]{Definition}

\newtheorem{remark}[theorem]{Remark}

\newlength{\arraycolseptmp}

\newcommand{\Fourier}{\mathcal{F}}

\newcommand{\argmin}{\operatorname{argmin}}
\newcommand{\argsup}{\operatorname{argsup}}

\newcommand{\C}{\mathbb{C}}
\newcommand{\convbi}{\circledast}
\newcommand{\Ds}{\mathscr{D}}
\newcommand{\Es}{\mathscr{E}}
\newcommand{\Fs}{\mathscr{F}}

\newcommand{\ip}[2]{\langle #1, #2 \rangle}
\newcommand{\Ip}[2]{\Big\langle #1, #2 \Big\rangle}
\newcommand{\jacobi}{\mbox{\large$\vartheta$}_3}
\newcommand{\jinc}{\operatorname{Jinc}}
\newcommand{\N}{\mathbb{N}}
\newcommand{\norm}[1]{\|#1\|}

\newcommand{\nsr}{r_{\varepsilon / \alpha}}
\newcommand{\R}{\mathbb{R}}
\newcommand{\rd}{\;\operatorname{d}}
\newcommand{\rg}{\operatorname{rg}}
\newcommand{\set}[2]{\{#1\, | \,#2\}}
\newcommand{\Set}[2]{\big\{#1\, \big| \,#2\big\}}
\newcommand{\sinc}{\operatorname{sinc}}
\newcommand{\spa}{\operatorname{span}}
\newcommand{\sumstack}[2]{\renewcommand{\arraystretch}{0.5}
\setlength{\arraycolseptmp}{\arraycolsep}
\setlength{\arraycolsep}{0pt}
  \begin{array}{c}
    \scriptstyle #1 \\
    \scriptstyle #2
  \end{array}
  \setlength{\arraycolsep}{\arraycolseptmp}
  \renewcommand{\arraystretch}{1.0}}
\newcommand{\supp}{\operatorname{supp}}
\newcommand{\Z}{\mathbb{Z}}
\newcommand{\bigO}{\mathcal{O}}

\begin{document}

\title[Greedy Solution of Ill-Posed Problems]{Greedy Solution of Ill-Posed Problems: Error Bounds and Exact Inversion}

\author{L Denis$^{1,2}$, D A Lorenz$^3$ and D Trede$^{4,}$\footnote[7]{Author to whom correspondence shall be addressed.}}
\address{$^1$\'{E}cole Sup\'{e}rieure de Chimie Physique \'{E}lectronique de Lyon, F--69616 Lyon, France\\
  $^2$Laboratoire Hubert Curien, UMR CNRS 5516, Universit\'{e} de Lyon, F--42000 St Etienne, France\\
  $^3$TU Braunschweig, D--38092 Braunschweig, Germany\\
$^4$Zentrum f\"ur Technomathematik, University of Bremen, D--28334 Bremen, Germany}

\eads{\mailto{loic.denis@cpe.fr}, \mailto{d.lorenz@tu-braunschweig.de}, and \mailto{trede@math.uni-bremen.de}}

\begin{abstract}
The orthogonal matching pursuit~(OMP) is a greedy algorithm to solve
sparse approximation problems.  Sufficient conditions for exact
recovery are known with and without noise. In this paper we
investigate the applicability of the OMP for the solution of
ill-posed inverse problems in general and in particular for two
deconvolution examples from mass spectrometry and digital
holography respectively.

In sparse approximation problems one often has to deal with the
problem of redundancy of a dictionary, i.e.~the atoms are not linearly
independent. However, one expects them to be approximatively
orthogonal and this is quantified by the so-called incoherence.  This idea cannot be
transfered to ill-posed inverse problems since here the atoms are
typically far from orthogonal: The ill-posedness of the operator
causes that the correlation of two distinct atoms probably gets huge,
i.e.~that two atoms can look much alike.  Therefore one needs
conditions which take the structure of the problem into account and
work without the concept of coherence.  In this paper we develop
results for exact recovery of the support of noisy signals.
In the two examples in mass spectrometry and digital holography
we show that our results lead to practically relevant estimates
such that one may check a priori if the experimental setup
guarantees exact deconvolution with OMP. Especially in the example
from digital holography our analysis may be regarded as a first
step to calculate the resolution power of droplet holography.
\end{abstract}

\ams{65J20, 94A12, 47A52}

\section{Introduction}
We consider linear inverse problems, i.e.~we are given a bounded,
injective, linear operator $K:B\to H$ mapping from a Banach space $B$
into a Hilbert space $H$. Moreover, we assume that for an unknown
$v\in \rg K$ we are given a noisy observation $v^\varepsilon$ with
$\norm{v-v^\varepsilon}\leq\varepsilon$ and try to reconstruct the
solution of
\begin{equation}
  \label{inv_prob}
  Ku = v
\end{equation}
from the knowledge of $v^\varepsilon$. We are particularly interested
in the case where the unknown solution $u$ may be expressed sparsely
in a known dictionary, i.e.~we consider that there is a family $\Es
:= \{e_i\}_{i\in \Z} \subset B$ of unit-normed vectors which span the
space in which we expect the solution and which we call
\emph{dictionary}.  With sparse we mean here that there exists a
finite decomposition of $u$ with $N$ \emph{atoms} $e_i\in\Es$,
\[
  u = \sum\limits_{i\in\Z} \alpha_i e_i
  \quad \mbox{with} \quad
  \alpha_i \in\R, \quad \norm{\alpha}_{\ell^0} =: N<\infty.
\]
In the following we denote with $I$ the support of $\alpha$, i.e.~$I =
\set{i\in\Z}{\alpha_i\neq 0}$. 
For any subset $J\subset \Z$ we denote $\Es(J):= \set{e_i}{i\in J}$. 

This setting appears in several 
signal processing problems, e.g.~in mass
spectrometry~\cite{klann2007shrink_vs_deconv} where the signal is
modeled as a sum of Dirac peaks (so-called impulse trains):
\[
  u = \sum\limits_{i\in\Z} \alpha_i \; \delta(\cdot-x_i).
\]
Other applications for instance can be found
in astronomical signal processing problems or digital holography, 
cf.~\cite{soulez2007holography}, where images arise as
superposition of characteristic functions of balls with different centers $x_i$ and
radii $r_j$, 
\[
  u = \sum\limits_{i,j\in\Z} \alpha_{i,j} \; \chi_{B_{r_j}} (\cdot - x_i).
\]
Typically $K$ does not have a continuous inverse and hence, the
solution of the operator equation~(\ref{inv_prob}) does not depend
continuously on the data.  This turns out to be a challenge for the
case where only noisy data $v^\varepsilon$ with noise level
$\norm{v-v^\varepsilon}\leq\varepsilon$ are available---as it is
always the case in praxis.  First a small perturbation $\varepsilon$
can cause an arbitrarily large error in the reconstruction $u$ of
``$Ku=v^\varepsilon$'' and second no solution $u$ exists if
$v^\varepsilon$ is not in the range of $K$.

Inverse problems formulated in Banach spaces have been of recent
interest and there are a several results which deal with solving
inverse problems formulated in Banach spaces, e.g.~results concerning
error
estimates~\cite{burger2004convarreg,resmerita2005regbanspaces,hein2008convratesbanach,hofmann2007convtikban,lorenz2008reglp,lorenz2008besovscales,Bonesky2009}
or Landweber-like iterations or minimization methods for Tikhonov
functionals, see
e.g.~\cite{schoepfer2006illposedbanach,daubechies2003iteratethresh,
  bredies2008forwardbackwardbanach,bredies2008harditer,schuster2008tikbanach,bonesky2007gencondgradnonlin,lorenz2008conv_speed_sparsity,griesse2008ssnsparsity}.

In the following, an approximate solution of ``$Ku=v^\varepsilon$'' shall
be found by deriving iteratively the correlation between the residual
and the unit-normed atoms of the dictionary
\[
  \Ds := \{d_i\}_{i\in\Z} :=  \Big\{\frac{K e_i}{\norm{K e_i}}\Big\}_{i\in\Z}.
\]
Note that since the operator $K$ is injective we get that $K e_i \neq
0$ for all $i\in\Z$ and hence the dictionary $\Ds$ is well defined.
In any step we select that unit-normed atom from the dictionary $\Ds$
which is mostly correlated with the residual, hence the name
``greedy'' method.  To stabilize the solution of
``$Ku=v^\varepsilon$'' the iteration has to be stopped early enough.

For solving the operator equation~(\ref{inv_prob}) 
with noiseless data and the case where only noisy data $v^\varepsilon$
with noise-bound $\norm{v-v^\varepsilon}\leq \varepsilon$ are available 
we use the orthogonal matching pursuit,
first proposed in the signal processing context by
Davis et al. in~\cite{mallat1994orthogonal} 
and Pati et al. in~\cite{Pati1993} as an improvement upon the matching pursuit algorithm~\cite{mallat1993matchingpursuits}:

\begin{algorithm}[H]
  \caption{Orthogonal Matching Pursuit}\label{alg_OMP}

  \begin{algorithmic}
    \REQUIRE $k:=0$ and $I^0:=\emptyset$.
      \textbf{Initialize} $r^0:=v^\varepsilon$ (resp. $r^0:=v$ for $\varepsilon=0$) and $\widehat{u}^0 := 0$.
    \WHILE{$\norm{r^k} > \varepsilon$ (resp. $\norm{r^k} \neq 0$)}
      \STATE \quad $k :=k+1,$
      \STATE \quad $i_k \in \argsup \Set{|\ip{r^{k-1}}{d_i}|}{d_i \in \Ds},$
      \STATE \quad $I^k := I^{k-1} \cup \{i_k\},$
      \STATE \textbf{Project} $u$ onto $\spa\Es(I^k)$
      \STATE \quad $\widehat{u}^k := \argmin \Set{\norm{v^\varepsilon - K \widehat{u}}^2}{\widehat{u} \in \spa\Es(I^k)},$
      \STATE \quad $r^k := v^\varepsilon - K \widehat{u}^k.$
    \ENDWHILE
  \end{algorithmic}
\end{algorithm}

Remark that in infinite dimensional Hilbert spaces the supremum
\begin{equation}\label{eq_sup_OMP}
  \sup \set{|\ip{r^{k-1}}{d_i}|}{d_i \in \Ds}
\end{equation}
does not have to be realized. Because of that OMP has a variant---called weak orthogonal
matching pursuit (WOMP)---which
does not choose the optimal atom in the sense of~(\ref{eq_sup_OMP})
but only one that is nearly optimal, i.e.~for some fixed $\omega\in(0,1]$ it chooses some $i_k\in\Z$ with
\[
  |\ip{r^{k-1}}{d_{i_k}}| \geq \omega \sup \, \set{|\ip{r^{k-1}}{d_i}|}{d_i \in \Ds}.
\]

In~\cite{Tropp2004greed} a sufficient condition for exact recovery with
algorithm~\ref{alg_OMP} is derived, and in~\cite{donoho2004stablesparse}
it is transfered to noisy signals with the concept of coherence,
which quantifies the magnitude of redundancy.
This idea cannot be transfered to ill-posed inverse problems
directly since the operator typically causes that the
correlation of two distinct atoms probably gets huge.
Therefore in~\cite{Dossal2005,Gribonval2008} the authors derive a
recovery condition which works without the concept of coherence.
For a comprehensive presentation of OMP cf.~e.g.~\cite{Mallat2009}.

The paper is organized as follows. In section~\ref{sec_ERC} we reflect
the conditions for exact recovery for OMP derived in~\cite{Tropp2004greed}
and~\cite{Dossal2005,Gribonval2008} and rewrite them in the
context of infinite-dimensional inverse problems. 
Section~\ref{sec_epsERC} contains the main theoretical results
of the paper, namely the generalization of these results to noisy
signals.  In
section~\ref{sec_massspectro} we apply
the deduced recovery conditions in the presence of noise
to an
example from mass spectrometry. Here, the data are given as sums of
Dirac peaks convolved with a Gaussian kernel.  To the end of this
section we utilize the deduced condition for simulated data of an
isotope pattern.  Another example from digital holography is concerned
in section~\ref{sec_holography}.  The data are given as sums of
characteristic functions convolved with a Fresnel function. This turns
out to be a challenge because the convolution kernel oscillates.
Similar to section~\ref{sec_massspectro} we apply the theoretical condition
to simulated data, namely to digital holograms of particles.
The two examples from mass spectrometry and digital holography illustrate
that our conditions for exact recovery lead to practically relevant
estimates such that one may check a priori if the experimental
setup guarantees exact deconvolution with OMP.
Especially in the example from digital holography our analysis may be
regarded as a first step to calculate the resolution power of
droplet holography.

\section{Exact Recovery Conditions}\label{sec_ERC}
In~\cite{Tropp2004greed}, Tropp gives a sufficient and necessary condition 
for exact recovery with OMP.
Next, we list this result in the language of infinite-dimensional inverse problems.

Define the linear continuous synthesis operator for the 
dictionary $\Ds = \{d_i\} = \{K e_i/\norm{K e_i}\}$ via
\[
  \begin{array}{rrl}
    D: & \ell^1             & \to H, \\
       & (\beta_i)_{i\in\Z} & \mapsto \sum\limits_{i\in\Z} \beta_i d_i = \sum\limits_{i\in\Z} \beta_i \tfrac{K e_i}{\norm{K e_i}}.
  \end{array}
\]
Since $D$ is linear and bounded, the Banach space adjoint operator
\[
  D^\ast: H  \to (\ell^1)^\ast = \ell^\infty
\]
exists and arises as
\[
  D^\ast v  = (\ip{v}{d_i})_{i\in\Z}
  = \big(\ip{v}{\tfrac{K e_i}{\norm{K e_i}}} \big)_{i\in\Z}.
\]
Note that the use of $\ell^1$ and its dual $\ell^\infty$ arises
naturally in this context.  Furthermore, for $J\subset \Z$ we denote
with $P_J:\ell^1\to\ell^1$ the projection onto $J$ and with
$A^\dagger$ the pseudoinverse operator of $A$.  With this notation we
state the following theorem.  

\begin{theorem}[Tropp~\cite{Tropp2004greed}]\label{theorem_ERC}
  Let $\alpha \in \ell^0$ with $\supp \alpha = I$,
  $u = \sum_{i\in\Z} \alpha_i e_i$ be the source and $v = K u$ the measured signal.
  If the operator $K:B \to H$
  and the dictionary $\Es = \{e_i\}_{i\in\Z}$ fulfill
  the \emph{Exact Recovery Condition (ERC)}
  \begin{equation}\label{eq_ERC}
    \sup\limits_{d\in \Ds(I^\complement)} \norm{(D P_I)^\dagger d}_{\ell^1}
    <1,
  \end{equation}
  then OMP with its parameter $\varepsilon$ set to 0 recovers $\alpha$ exactly.
\end{theorem}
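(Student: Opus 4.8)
The plan is to prove that, while the residual is nonzero, the greedy selection of OMP can only choose atoms indexed by the true support $I$; the algorithm then builds up subsets of $I$ and must terminate having reconstructed $u$ exactly. Everything reduces to comparing the largest correlation over the ``good'' atoms $\Ds(I)$ with the largest correlation over the ``bad'' atoms $\Ds(I^\complement)$, which is precisely what the ERC controls. I would run an induction on the iteration index $k$ with hypothesis $I^k\subseteq I$ together with $r^k\in\rg(DP_I)$, the (finite-dimensional, hence closed) span of the good atoms. The base case is immediate: $I^0=\emptyset$ and $r^0=v=Ku\in\rg(DP_I)$ because $u$ is supported on $I$. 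The range statement propagates for free, since the projection step yields $\widehat{u}^k\in\spa\Es(I^k)$, whence $K\widehat{u}^k\in\rg(DP_I)$ and $r^k=v-K\widehat{u}^k\in\rg(DP_I)$; so the real content lies in the selection step.

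The heart of the argument is an estimate valid for any $r\in\rg(DP_I)$. From $r\in\rg(DP_I)$ one has $DP_I(DP_I)^\dagger r=r$, and together with the factorization $(DP_I)^\dagger=\big((DP_I)^\ast DP_I\big)^{-1}(DP_I)^\ast$---available because injectivity of $K$ forces $DP_I$ to be injective on the finite index set $I$---this gives the key identity
\[
  \ip{r}{d_i}=\Ip{(DP_I)^\dagger d_i}{(DP_I)^\ast r}
  \qquad (i\in I^\complement),
\]
where the right-hand pairing is the $\ell^1$--$\ell^\infty$ duality. Hölder's inequality together with $\norm{(DP_I)^\ast r}_{\ell^\infty}=\max_{j\in I}|\ip{r}{d_j}|$ then yields
\[
  |\ip{r}{d_i}|\leq\norm{(DP_I)^\dagger d_i}_{\ell^1}\,\max_{j\in I}|\ip{r}{d_j}|
  \leq\Big(\sup_{d\in\Ds(I^\complement)}\norm{(DP_I)^\dagger d}_{\ell^1}\Big)\max_{j\in I}|\ip{r}{d_j}|.
\]
By the ERC the bracketed factor is $<1$, so the supremum of the correlations over $I^\complement$ is strictly smaller than the maximum over $I$; the latter is attained because $I$ is finite, and it is strictly positive whenever $r\neq0$ (if all good correlations vanished, $r$ would be orthogonal to $\rg(DP_I)$ while lying in it, forcing $r=0$). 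Hence the $\argsup$ defining $i_k$ is attained at some index in $I$, which both closes the induction and disposes of the infinite-dimensional subtlety that the supremum need not otherwise be realized.

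It remains to argue termination and exactness. After each projection the residual is orthogonal to $\spa\Ds(I^k)$, so the already chosen atoms carry zero correlation and cannot be reselected; each step therefore adjoins a fresh index of the finite set $I$. After at most $N=|I|$ steps we reach $I^k=I$, and the least-squares projection onto $\spa\Es(I)$ returns $\widehat{u}^k=u$ since $u\in\spa\Es(I)$ and $K$ is injective, so $r^k=0$ and the algorithm halts with $\alpha$ recovered exactly.

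The step I expect to be the main obstacle is establishing the key identity cleanly in the operator setting: verifying that the good atoms are linearly independent so that $DP_I$ is injective with closed range and the left-inverse factorization of $(DP_I)^\dagger$ applies, and checking that the pairing appearing in Hölder's inequality is the $\ell^1$--$\ell^\infty$ duality induced by $D$. Once this identity is secured, the correlation comparison, the induction, and the termination count are all routine.
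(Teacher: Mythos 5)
Your proof is correct, and it follows essentially the same route as the paper: the paper does not prove Theorem~\ref{theorem_ERC} itself (it cites Tropp), but its proof of the noisy generalization, Theorem~\ref{theorem_ERCnoise}, rests on exactly your key step --- writing the residual as $(P_ID^\ast)^\dagger P_ID^\ast s^k$ and bounding the off-support correlations by $\sup_{d\in\Ds(I^\complement)}\norm{(DP_I)^\dagger d}_{\ell^1}$ times the on-support maximum, there phrased via the operator norm $\norm{(DP_I)^\dagger DP_{I^\complement}}_{\ell^1,\ell^1}$ rather than your atom-wise H\"older/duality pairing. The induction, the non-reselection of already chosen atoms, and the termination count are likewise the same.
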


Theorem~\ref{theorem_ERC} gives a sufficient condition for exact recovery with OMP.
In~\cite{Tropp2004greed} Tropp shows that condition~(\ref{eq_ERC})
is even necessary, in the sense that if
\[
  \sup\limits_{d\in \Ds(I^\complement)} \norm{(D P_I)^\dagger d}_{\ell^1}
  \geq 1,
\]
then there exists a signal with support $I$ for
which OMP does not recover $\alpha$ with $v = K u = K \sum \alpha_i e_i$.

The ERC~(\ref{eq_ERC}) is hard to evaluate.  Therefore Dossal and
Mallat~\cite{Dossal2005} and Gribonval and
Nielsen~\cite{Gribonval2008} derive a weaker sufficient but not
necessary recovery condition that depends on inner products of the
dictionary atoms of $\Ds(I)$ and $\Ds(I^\complement)$ only.  
\begin{proposition}[Dossal and Mallat~\cite{Dossal2005}, Gribonval and Nielsen~\cite{Gribonval2008}]\label{theorem_suf}
  Let $\alpha \in \ell^0$ with $\supp \alpha = I$,
  $u = \sum_{i\in\Z} \alpha_i e_i$ be the source and $v = K u$ the measured signal.
  If the operator $K:B \to H$
  and the dictionary $\Es = \{e_i\}_{i\in\Z}$ fulfill the \emph{Neumann ERC}
  \begin{equation}\label{eq_suf}
    \sup\limits_{i\in I} \sum\limits_{\sumstack{j\in I}{j \neq i}} |\ip{d_i}{d_j}|
    + \sup\limits_{i\in I^\complement} \sum\limits_{j\in I} |\ip{d_i}{d_j}|
    <1,
  \end{equation}
   then OMP with its parameter $\varepsilon$ set to 0 recovers $\alpha$.
\end{proposition}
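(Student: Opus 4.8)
The plan is to derive Tropp's ERC~(\ref{eq_ERC}) from the Neumann ERC~(\ref{eq_suf}) by controlling the $\ell^1$-norm of the pseudoinverse $(D P_I)^\dagger$ through a Neumann series expansion of the inverse Gram matrix of the active atoms---which explains the name of the condition. Throughout, the key structural fact to exploit is that, since $\alpha\in\ell^0$, the support $I$ is \emph{finite}, say $|I|=N$, so that $D P_I$ has the finite-dimensional range $\spa\Ds(I)\subset H$ and all objects restricted to $I$ are genuine $N\times N$ matrices; the infinite index set $I^\complement$ will enter only through a supremum over single atoms.

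First I would record the explicit form of the pseudoinverse. Writing $G := (D P_I)^\ast (D P_I)$ for the Gram matrix of the active atoms, with entries $G_{ij} = \ip{d_i}{d_j}$ for $i,j\in I$, the standard factorization gives $(D P_I)^\dagger = G^{-1} (D P_I)^\ast$, provided $G$ is invertible. Since the atoms are unit-normed, $G$ has unit diagonal, so I decompose $G = \Id - M$ with $M := \Id - G$ having zero diagonal and off-diagonal entries $-\ip{d_i}{d_j}$. Measuring matrices in the $\ell^1\to\ell^1$ operator norm (the maximum absolute column sum) and using $|\ip{d_i}{d_j}| = |\ip{d_j}{d_i}|$ yields
\[
  \norm{M}_{\ell^1\to\ell^1} = \sup_{i\in I}\ \sum_{j\in I,\, j\neq i} |\ip{d_i}{d_j}| =: a,
\]
which is precisely the first term of~(\ref{eq_suf}); in particular the hypothesis $a+b<1$, with $b$ denoting the second term, forces $a<1$.

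The next step is the Neumann estimate. From $a<1$ it follows that $G = \Id - M$ is invertible---so the factorization above is legitimate and the active atoms are automatically linearly independent---with $G^{-1} = \sum_{k\geq 0} M^k$ and hence $\norm{G^{-1}}_{\ell^1\to\ell^1}\leq (1-a)^{-1}$. For the second factor, fix $d = d_i$ with $i\in I^\complement$; then $(D P_I)^\ast d_i = (\ip{d_j}{d_i})_{j\in I}$, so that $\norm{(D P_I)^\ast d_i}_{\ell^1} = \sum_{j\in I}|\ip{d_j}{d_i}|\leq b$. Combining the two bounds gives
\[
  \norm{(D P_I)^\dagger d_i}_{\ell^1}
  \leq \norm{G^{-1}}_{\ell^1\to\ell^1}\,\norm{(D P_I)^\ast d_i}_{\ell^1}
  \leq \frac{b}{1-a},
\]
and taking the supremum over $d_i\in\Ds(I^\complement)$. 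Since $a+b<1$ gives $b<1-a$, the right-hand side is strictly below $1$, which is exactly the ERC~(\ref{eq_ERC}), and the claim follows by Theorem~\ref{theorem_ERC}.

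The only genuinely delicate point here is bookkeeping rather than analysis. One must consistently use the $\ell^1\to\ell^1$ operator norm, so that the estimates land in the $\ell^1$-norm demanded by~(\ref{eq_ERC}) and so that the Neumann bound on $\norm{G^{-1}}$ actually reproduces the first summand of~(\ref{eq_suf}); and one must justify the pseudoinverse factorization, which hinges on the invertibility of $G$---itself a free by-product of $a<1$ (equivalently, of strict diagonal dominance of $G$). The infinite cardinality of $I^\complement$ poses no obstacle, as the estimate is applied to one atom $d_i$ at a time and the resulting bound $b/(1-a)$ is uniform in $i$.
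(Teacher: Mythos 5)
Your argument is correct and is essentially the proof the paper points to: the paper itself only remarks that ``the proof uses a Neumann series estimate for $P_I D^\ast D P_I$'' and defers to~\cite{Gribonval2008}, and your derivation---factoring $(DP_I)^\dagger = G^{-1}(DP_I)^\ast$ with $G = P_I D^\ast D P_I$, bounding $\norm{G^{-1}}_{\ell^1\to\ell^1}$ by $(1-a)^{-1}$ via the Neumann series, and bounding $\norm{(DP_I)^\ast d_i}_{\ell^1}$ by $b$ to conclude $b/(1-a)<1$ from $a+b<1$---is exactly that estimate, correctly reducing the claim to Theorem~\ref{theorem_ERC}. The bookkeeping (column-sum norm, invertibility of $G$ from $a<1$, uniformity over the infinite set $I^\complement$) is all handled properly.
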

\noindent
The proof uses a Neumann series estimate for $P_I D^\ast D P_I$---this
clarifies the term ``Neumann'' ERC. The proof is contained in~\cite{Gribonval2008}.

\begin{remark}
  Obviously the Neumann ERC~(\ref{eq_suf}) is not necessary for exact recovery.
  A demonstrative example can be found in $\R^4$ with the signal
  $v=(1,1,1,0)^\top$ and the unit-normed dictionary
  $\Ds=\{d_1:=(1,0,0,0)^\top,d_2:=2^{-1/2}(1,1,0,0)^\top,d_3:=2^{-1/2}(1,0,1,0)^\top,d_4:=(0,0,0,1)^\top\}$.
  Here with $I=\{1,2,3\}$ and $I^\complement=\{4\}$ we get
  \[
    |\ip{d_1}{d_4}| + |\ip{d_2}{d_4}| + |\ip{d_3}{d_4}| = 0
  \]
  but
  \[
    |\ip{d_1}{d_2}| + |\ip{d_1}{d_3}| = \sqrt{2} > 1,
  \]
  hence the Neumann ERC is not fulfilled. The ERC~(\ref{eq_ERC}) is nevertheless fulfilled since in that
  case $\norm{(D P_I)^\dagger d_4}_{\ell^1}=0$.
 OMP will then recover exactly, as one could expect by considering that just $\{d_1,d_2,d_3\}$ span the $\R^3$.

  This counter-example may be generalized by considering $I\subset\Z$ such that
  \[
    \sup\limits_{i\in I^\complement} \sum\limits_{j\in I} |\ip{d_i}{d_j}| = 0
    \quad \mbox{ and } \quad
    \sup\limits_{i\in I} \sum\limits_{\sumstack{j\in I}{j \neq i}} |\ip{d_i}{d_j}| \geq 1.
  \]
  Here the Neumann ERC fails but for any signal with support $I$
  OMP will recover exactly since the atoms
  $d_i$, $i\in I$, and $d_j$, $j\in I^\complement$, are uncorrelated
  and OMP never chooses an atom twice.

\end{remark}

\begin{remark}
  The sufficient conditions for WOMP with weakness parameter $\omega\in(0,1]$ are
  \[
    \sup\limits_{d\in \Ds(I^\complement)} \norm{(D P_I)^\dagger d}_{\ell^1}
    <\omega
  \]
  and
  \[
    \sup\limits_{i\in I} \sum\limits_{\sumstack{j\in I}{j \neq i}} |\ip{d_i}{d_j}|
    + \frac{1}{\omega} \, \sup\limits_{i\in I^\complement} \sum\limits_{j\in I} |\ip{d_i}{d_j}|
    <1,
  \]
  according to theorem~\ref{theorem_ERC} and
  proposition~\ref{theorem_suf}, respectively.
  They are proved analogously to the OMP case---same as all other following WOMP results.
\end{remark}

Usually for sparse approximation problems the behavior of the dictionary is characterized
as follows.
\begin{definition}
  Let $\Fs:=\{f_i\}_{i\in\Z}$ be a dictionary. Then the corresponding \emph{coherence parameter} $\mu$
  and \emph{cumulative coherence} $\mu_1(m)$ for a positive integer $m$ are
  defined as
  \[
    \mu := \sup\limits_{i\neq j}
    |\ip{f_i}{f_j}|
  \]
  and
  \[
    \mu_1(m) := \sup\limits_{\sumstack{\Lambda\subset\Z}{|\Lambda|=m}}
    \sup\limits_{i\notin\Lambda}\,
    \sum\limits_{j\in\Lambda}
    |\ip{f_i}{f_j}|
  \]
  respectively. Note that $\mu_1(1) = \mu$ and
  $\mu_1(m) \leq m\mu$ for all $m\in\N$.
\end{definition}

Since $\sup_{i\in I} \sum_{j\in I,j \neq i} |\ip{d_i}{d_j}| \leq \mu_1(N-1)$
and $\sup_{i\in I^\complement} \sum_{j\in I} |\ip{d_i}{d_j}| \leq \mu_1(N)$
we
get another condition in terms of the cumulative coherence,
which is even weaker than the Neumann ERC:
\begin{proposition}[Tropp~\cite{Tropp2004greed}]\label{theorem_coherence}
  Let $\alpha \in \ell^0$ with $\supp \alpha = I$,
  $u = \sum_{i\in\Z} \alpha_i e_i$ be the source and $v = K u$ the measured signal.
  If the operator $K:B \to H$
  and the dictionary $\Es = \{e_i\}_{i\in\Z}$ lead to a dictionary
  $\Ds$ which  fulfills the condition
  \begin{equation}\label{eq_coherence}
    \mu_1(N-1)
    + \mu_1(N)
    <1,
  \end{equation}
  then OMP with its parameter $\varepsilon$ set to 0 recovers $\alpha$.
\end{proposition}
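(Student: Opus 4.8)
The plan is to show that the cumulative-coherence condition~(\ref{eq_coherence}) is nothing but a convenient upper bound for the Neumann ERC~(\ref{eq_suf}), so that the statement follows immediately from Proposition~\ref{theorem_suf}. Concretely, I would bound the two terms on the left-hand side of~(\ref{eq_suf}) separately, using only the definition of $\mu_1(m)$ and the fact that $|I| = N$.

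First I would handle the ``diagonal-block'' term. For each fixed $i \in I$, the index set $\Lambda := I \setminus \{i\}$ has exactly $N-1$ elements and satisfies $i \notin \Lambda$, so it is an admissible competitor in the supremum defining $\mu_1(N-1)$; this yields $\sum_{j\in I,\, j\neq i} |\ip{d_i}{d_j}| \leq \mu_1(N-1)$, and taking the supremum over $i \in I$ bounds the first term of~(\ref{eq_suf}) by $\mu_1(N-1)$. For the ``off-diagonal'' term I would argue in the same way: for each $i \in I^\complement$ the set $\Lambda := I$ has size exactly $N$ and again $i \notin \Lambda$, whence $\sum_{j\in I} |\ip{d_i}{d_j}| \leq \mu_1(N)$, and the supremum over $i \in I^\complement$ is bounded by $\mu_1(N)$.

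Adding these two bounds gives
\[
  \sup_{i\in I} \sum_{\sumstack{j\in I}{j\neq i}} |\ip{d_i}{d_j}|
  + \sup_{i\in I^\complement} \sum_{j\in I} |\ip{d_i}{d_j}|
  \;\leq\; \mu_1(N-1) + \mu_1(N) \;<\; 1,
\]
where the last inequality is exactly the hypothesis~(\ref{eq_coherence}). Thus the Neumann ERC~(\ref{eq_suf}) holds, and Proposition~\ref{theorem_suf} immediately gives exact recovery of $\alpha$ by OMP.

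I do not expect any genuine obstacle here: the argument is pure bookkeeping, and all of the analytic content already resides in Proposition~\ref{theorem_suf}. The only point that requires a moment of care is matching the cardinalities precisely—selecting $\Lambda$ of size $N-1$ for the first term and of size $N$ for the second—so that the admissibility constraints in the definition of $\mu_1(m)$ are met exactly; this is why the condition naturally reads $\mu_1(N-1)+\mu_1(N)$ rather than $\mu_1(N)+\mu_1(N)$.
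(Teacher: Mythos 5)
Your proof is correct and is precisely the argument the paper uses: the two displayed bounds $\sup_{i\in I}\sum_{j\in I,\,j\neq i}|\ip{d_i}{d_j}|\leq\mu_1(N-1)$ and $\sup_{i\in I^\complement}\sum_{j\in I}|\ip{d_i}{d_j}|\leq\mu_1(N)$ appear verbatim in the sentence preceding the proposition, reducing it to the Neumann ERC of proposition~\ref{theorem_suf}. Your elaboration of the cardinality bookkeeping in the definition of $\mu_1(m)$ is the only added detail, and it is accurate.
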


Remark, that the condition in proposition~\ref{theorem_coherence} for
ill-posed inverse problems might be unsuitable, since the typically compact operator
causes that the coherence parameter $\mu$ probably is close to one.
Therefore the cumulative coherence can grow large with increasing support.

\begin{remark}\label{remark_bp}
  Another major approach for solving sparse approximation problems is the basis pursuit~(BP).
  Here one solves the convex optimization problem
  \[
    \min\limits_{\alpha\in\ell^2} \norm{\alpha}_{\ell^1}
    \quad \mbox{subject to} \quad
    K \textstyle\sum \alpha_i e_i = v.
  \]
  This idea is closely related to
  Tikhonov regularization with sparsity constraint.
  Here the basic idea is to minimize
  least squares with $\ell^1$-penalty,
  \[
    \min\limits_{\alpha\in\ell^2} \norm{K \textstyle\sum \alpha_i e_i - v}^2_H + \gamma \norm{\alpha}_{\ell^1}.
  \]
  In~\cite{Tropp2004greed} it is shown that the ERC~(\ref{eq_ERC}) also
  ensures the exact recovery by means of BP.
  Since the proposition~\ref{theorem_suf} and proposition~\ref{theorem_coherence}
  are estimates for the ERC~(\ref{eq_ERC}) and the proofs do not take into account
  any properties of the OMP algorithm these results hold here, too.
\end{remark}

\section{Exact Recovery in the Presence of Noise}\label{sec_epsERC}
In~\cite{donoho2004stablesparse}, Donoho, Elad and Temlyakov transfer Tropp's result~\cite{Tropp2004greed}
to noisy signals. They derive a condition for exact recovery in terms of 
the coherence parameter $\mu$ of a dictionary.
This condition is---just as remarked in~\cite{donoho2004stablesparse}---an obvious weaker condition.
As already mentioned, in particular for ill-posed problems this condition
is too restrictive.
In the following we will give exact recovery conditions in the presence of noise
which are closer to the results of theorem~\ref{theorem_ERC} and proposition~\ref{theorem_suf}.

Assume that instead of exact data $v=Ku \in H$ only a noisy version 
\[
  v^\varepsilon = v+\eta = Ku+\eta
\]
with noise level $\norm{v-v^\varepsilon} = \norm{\eta} \leq\varepsilon$ can be observed.
Now, the OMP has to stop as soon as the representation error
$r^k$ is smaller or equal to the noise level $\varepsilon$, i.e. if $\varepsilon \geq \norm{r^k}$.

\begin{theorem}[ERC in the Presence of Noise]\label{theorem_ERCnoise}
  Let $\alpha \in \ell^0$ with $\supp \alpha = I$.
  Let $u = \sum_{i\in\Z} \alpha_i e_i$ be the source and 
  $v^\varepsilon = K u + \eta$ the noisy data with noise level $\norm{\eta}\leq \varepsilon$ and
  noise-to-signal-ratio
  \[
    \nsr := \frac{\sup\limits_{i\in\Z} |\ip{\eta}{d_i}|}
              {\min\limits_{i\in I} |\alpha_i| \norm{K e_i}}.
  \]
  If  the operator $K:B \to H$
  and the dictionary $\Es = \{e_i\}_{i\in\Z}$ fulfill the \emph{Exact Recovery Condition in Presence of Noise ($\varepsilon$ERC)}
  \begin{equation}\label{eq_ERCnoise}
    \sup\limits_{d\in \Ds(I^\complement)} \norm{(D P_I)^\dagger d}_{\ell^1}
    <1 - 2 \, \nsr 
    \frac{1}{1-\sup\limits_{i\in I} \sum\limits_{\sumstack{j\in I}{j \neq i}} |\ip{d_i}{d_j}|},
  \end{equation}
 	and $\sup\limits_{i\in I} \sum\limits_{\sumstack{j\in I}{j \neq i}} |\ip{d_i}{d_j}|<1$,
  then OMP recovers the support $I$ of $\alpha$ exactly.
\end{theorem}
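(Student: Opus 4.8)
The plan is to prove the theorem by induction on the iteration counter $k$, showing that as long as the set $I^{k-1}$ of indices chosen so far is a proper subset of the true support $I$ (and $I^{k-1}\subseteq I$), the orthogonal matching pursuit selects a \emph{new correct} atom $i_k\in I\setminus I^{k-1}$, and that the algorithm terminates precisely once $I^{k}=I$. The termination half is immediate: once $I^{k}=I$, the least-squares step produces the orthogonal projection of $v^\varepsilon$ onto $\spa\Ds(I)$, so the residual $r^{k}$ is the projection of $v^\varepsilon$ onto the orthogonal complement of $\spa\Ds(I)$; this annihilates the signal part $Ku=\sum_{i\in I}\alpha_i\norm{Ke_i}d_i$ and leaves only projected noise, whence $\norm{r^{k}}\leq\norm{\eta}\leq\varepsilon$ and the while-loop stops. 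The core is therefore the selection step: since $\norm{d_i}=1$ and $I$ is finite, it suffices to verify the strict greedy inequality
\[
  \sup_{i\in I^\complement}|\ip{r^{k-1}}{d_i}| < \sup_{i\in I}|\ip{r^{k-1}}{d_i}|,
\]
which forces the global $\argsup$ defining $i_k$ to be attained on $I$.

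First I would decompose the residual. Because the projection in algorithm~\ref{alg_OMP} is the orthogonal projection onto $\spa\Ds(I^{k-1})$ and $I^{k-1}\subseteq I$, one may write $r^{k-1}=s^{k-1}+n^{k-1}$, where $s^{k-1}$ is the projection of the noiseless signal $v=DP_I\beta$ with $\beta_i:=\alpha_i\norm{Ke_i}$, and $n^{k-1}$ is the corresponding projection of the noise $\eta$. The decisive point is that $s^{k-1}\in\rg(DP_I)$, so the linear algebra underlying theorem~\ref{theorem_ERC} applies to it verbatim: writing $w=s^{k-1}=(DP_I)c$ and using, for $i\in I^\complement$, the identity $\ip{w}{d_i}=\ip{(DP_I)^\ast w}{(DP_I)^\dagger d_i}$ as an $\ell^\infty$--$\ell^1$ pairing together with Hölder's inequality, one obtains
\[
  \sup_{i\in I^\complement}|\ip{s^{k-1}}{d_i}|
  \leq E\,\sup_{i\in I}|\ip{s^{k-1}}{d_i}|,
  \qquad
  E:=\sup_{d\in\Ds(I^\complement)}\norm{(DP_I)^\dagger d}_{\ell^1}.
\]
This extracts the left-hand side of the $\varepsilon$ERC~(\ref{eq_ERCnoise}) as the constant controlling the noiseless part of the residual.

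Next I would control the noise and lower-bound the signal correlation. Abbreviating $M:=\sup_{i\in I}|\ip{s^{k-1}}{d_i}|$ and $\nu:=\sup_i|\ip{n^{k-1}}{d_i}|$, the triangle inequality reduces the greedy inequality to $EM+\nu<M-\nu$, i.e.\ to the scalar condition $E<1-2\nu/M$. It then remains to show $\nu/M\leq \nsr/\big(1-\sup_{i\in I}\sum_{\sumstack{j\in I}{j\neq i}}|\ip{d_i}{d_j}|\big)$. The numerator is handled by $\nu\leq\sup_i|\ip{\eta}{d_i}|$, the quantity appearing in the definition of $\nsr$. For the denominator I would evaluate the correlation $\ip{s^{k-1}}{d_i}$ at an index maximizing $|\beta_i|$ and peel off the off-diagonal Gram contributions; this is the same Neumann-series/Gershgorin estimate for $P_ID^\ast DP_I$ used in proposition~\ref{theorem_suf}, and under the hypothesis $\sup_{i\in I}\sum_{\sumstack{j\in I}{j\neq i}}|\ip{d_i}{d_j}|<1$ it yields
\[
  M\ \geq\ \min_{i\in I}|\beta_i|\,\Big(1-\sup_{i\in I}\sum_{\sumstack{j\in I}{j\neq i}}|\ip{d_i}{d_j}|\Big),
\]
which, combined with $\nu\leq\sup_i|\ip{\eta}{d_i}|$, reproduces exactly the factor $1/(1-\,\cdot\,)$ and the ratio $\nsr$ on the right-hand side of~(\ref{eq_ERCnoise}).

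The main obstacle is the bookkeeping across iterations. For $k=1$ the projection is trivial, $n^{0}=\eta$, and the two estimates combine at once to give precisely~(\ref{eq_ERCnoise}). For $k>1$ one has $\ip{n^{k-1}}{d_i}=\ip{\eta}{(\Id-P_{I^{k-1}})d_i}$, so the delicate part is to show that projecting the atoms off the already-chosen span neither inflates $\nu$ beyond $\sup_i|\ip{\eta}{d_i}|$ nor degrades the lower bound on $M$—both of which are what the condition $\sup_{i\in I}\sum_{\sumstack{j\in I}{j\neq i}}|\ip{d_i}{d_j}|<1$, via the conditioning of the Gram matrix of $\Ds(I)$, is designed to guarantee. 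The remaining point to secure the induction is that the loop does not terminate before $I$ is exhausted, i.e.\ that $\norm{r^{k-1}}>\varepsilon$ whenever $I^{k-1}\subsetneq I$; this is where the smallest-coefficient quantity $\min_{i\in I}|\alpha_i|\norm{Ke_i}$ in $\nsr$ enters, ensuring that the uncaptured signal keeps the residual above the noise level so that every true atom is reached, which closes the induction.
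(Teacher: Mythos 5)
Your overall skeleton (induction, reduction to the strict greedy inequality, extraction of the constant $E=\sup_{d\in\Ds(I^\complement)}\norm{(DP_I)^\dagger d}_{\ell^1}$ via the Tropp identity and H\"older, and the Neumann/Gershgorin lower bound on $M$ producing the factor $1/(1-\sup_{i\in I}\sum_{j\in I,\,j\neq i}|\ip{d_i}{d_j}|)$) is the same as the paper's, and your base case $k=1$ is complete and correct. The problem is your choice of residual decomposition for $k>1$. You split $r^{k-1}=(\Id-P)v+(\Id-P)\eta$ with $P$ the orthogonal projector onto $\spa\Ds(I^{k-1})$, so your noise term is the \emph{projected} noise $n^{k-1}=(\Id-P)\eta$. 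You then need $\nu=\sup_i|\ip{(\Id-P)\eta}{d_i}|=\sup_i|\ip{\eta}{(\Id-P)d_i}|\leq\sup_i|\ip{\eta}{d_i}|$, and you correctly flag this as ``the delicate part'' --- but you never prove it, and it is not true in general: $(\Id-P)d_i$ is not an element of the dictionary, so its correlation with $\eta$ is not controlled by $\sup_j|\ip{\eta}{d_j}|$ (only by $\norm{\eta}\leq\varepsilon$, which is a much weaker bound and is exactly what the sharper quantity $\sup_j|\ip{\eta}{d_j}|$ in the definition of $\nsr$ is meant to avoid). Deferring this to ``the conditioning of the Gram matrix of $\Ds(I)$'' does not close it; the Gram condition controls the signal part, not the interaction of $\eta$ with projected atoms. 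As written, the induction step fails.

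The fix is the decomposition the paper actually uses: write $r^{k}=s^{k}+\eta$ with $s^{k}:=v-K\widehat u^{k}=\sum_{i\in I}(\alpha_i-\widehat\alpha_i^{k})\norm{Ke_i}\,d_i$, i.e.\ let the least-squares coefficients $\widehat\alpha^{k}$ absorb the noise-fitting while keeping the signal part supported on $I$, and leave the noise term as the \emph{raw} $\eta$ at every iteration. Then $\nu=\sup_i|\ip{\eta}{d_i}|$ trivially, and the lower bound $M\geq\min_{i\in I}|\alpha_i|\norm{Ke_i}\,(1-\sup_{i\in I}\sum_{j\in I,\,j\neq i}|\ip{d_i}{d_j}|)$ is obtained by testing $s^{k}$ against $d_l$ for a not-yet-selected $l\in I\setminus I^{k}$, for which $\widehat\alpha_l^{k}=0$ (OMP never reselects an atom), so the diagonal coefficient is still $\alpha_l\norm{Ke_l}$. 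With that substitution your argument goes through essentially verbatim. One further remark: your observation that one must also rule out premature termination ($\norm{r^{k-1}}>\varepsilon$ while $I^{k-1}\subsetneq I$) is a legitimate point that the paper itself does not address; your appeal to $\min_{i\in I}|\alpha_i|\norm{Ke_i}$ is the right instinct but is likewise not carried out, since $\sup_i|\ip{\eta}{d_i}|$ being small does not by itself force $\norm{(\Id-P)v}>2\varepsilon$.
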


\begin{proof}
  We prove the $\varepsilon$ERC analogously to~\cite[theorem 3.1]{Tropp2004greed}
  by induction.
  Assume that OMP recovered the correct patterns in the first $k$ steps, i.e.
  \[
    \widehat{u}^k = \sum\limits_{i\in I^k} \widehat{\alpha}_i^k e_i,
  \]
  with $I^k\subset I$. Then we get for the residual
  \begin{align*}
    r^k & := v^\varepsilon - K \widehat{u}^k
    = v + \eta - K \widehat{u}^k
    = K \Big(\sum\limits_{i \in I} (\alpha_i - \widehat{\alpha}_i^k) e_i \Big) + \eta \\
    & = \sum\limits_{i \in I} \norm{Ke_i}(\alpha_i - \widehat{\alpha}_i^k) \, d_i + \eta,
  \end{align*}
  hence the noiseless residual $s^k := \sum \norm{Ke_i}(\widehat{\alpha}_i^k - \alpha_i) d_i$
  has support $I$.
  The correlation $|\ip{r^k}{d_i}|$, $i\in\Z$, can be estimated from below and above respectively via
  \[
    |\ip{r^k}{d_i}| = |\ip{s^k+\eta}{d_i}| \gtreqqless |\ip{s^k}{d_i}| \mp |\ip{\eta}{d_i}|.
  \]
  Hence with
  \[
    \sup\limits_{i\in I^\complement} |\ip{r^k}{d_i}| \leq \sup\limits_{i\in I^\complement} |\ip{s^k}{d_i}| + \sup\limits_{i\in\Z}|\ip{\eta}{d_i}|
  \]
  and
  \[
    \sup\limits_{i\in I} |\ip{s^k}{d_i}| - \sup\limits_{i\in\Z}|\ip{\eta}{d_i}| \leq \sup\limits_{i\in I} |\ip{r^k}{d_i}|
  \]
  we get the condition
  \[
    \norm{P_{I^\complement} D^\ast s^k}_{\ell^\infty} + \sup\limits_{i\in\Z}|\ip{\eta}{d_i}| < \norm{P_{I} D^\ast s^k}_{\ell^\infty} - \sup\limits_{i\in\Z}|\ip{\eta}{d_i}|,
  \]
  which ensures a right choice in the $(k+1)$-th step. 
  Since $(P_I D^\ast)^\dagger P_I D^\ast$ is the orthogonal projection onto $\Ds(I)$
  and $\supp s^k = I$ we can write
  \[
    s^k = (P_I D^\ast)^\dagger P_I D^\ast s^k.
  \]
  With this identity we get the sufficient condition for OMP
  in presence of noise
  \[
    \frac
    {\norm{P_{I^\complement} D^\ast (P_I D^\ast)^\dagger P_I D^\ast s^k}_{\ell^\infty}}
    {\norm{P_{I} D^\ast s^k}_{\ell^\infty}}
    < 1 - 2\,\frac{\sup\limits_{i\in\Z}|\ip{\eta}{d_i}|}{\norm{P_I D^\ast s^k}_{\ell^\infty}}.
  \]
  Consequently, since 
  $\norm{P_{I^\complement} D^\ast (P_I D^\ast)^\dagger P_I D^\ast s^k}_{\ell^\infty}
  \leq \norm{P_{I^\complement} D^\ast (P_I D^\ast)^\dagger}_{{\ell^\infty},{\ell^\infty}}
  \,\norm{P_{I} D^\ast s^k}_{\ell^\infty}$
  and with the definition of the adjoint operator $D^\ast$,
  we get the equivalent sufficient conditions for a correct choice in the $(k+1)$-th step
  \[
    \norm{P_{I^\complement} D^\ast (P_I D^\ast)^\dagger}_{{\ell^\infty},{\ell^\infty}}
    = \norm{(D P_I)^\dagger D P_{I^\complement}}_{{\ell^1},{\ell^1}}
    < 1 - 2\,\frac{\sup\limits_{i\in\Z}|\ip{\eta}{d_i}|}{\norm{P_I D^\ast s^k}_{\ell^\infty}}.
  \]
  Obviously, on the one hand we get
  \[
    \sup\limits_{i\in I^\complement} \norm{(D P_I)^\dagger d_i}_{\ell^1}
    \leq \sup\limits_{\sumstack{\norm{\beta}_{\ell^1}=1}{\supp \beta = I^\complement}} \norm{(D P_I)^\dagger \sum\limits_{i\in\Z} \beta_i d_i}_{\ell^1}
    = \norm{(D P_I)^\dagger D P_{I^\complement}}_{{\ell^1},{\ell^1}}
  \]
  and on the other hand, since $(DP_I)^\dagger$ is linear, we get
  \begin{align*}
    \sup\limits_{\sumstack{\norm{\beta}_{\ell^1}=1}{\supp \beta = I^\complement}} \norm{(D P_I)^\dagger \sum\limits_{i\in\Z} \beta_i d_i}_{\ell^1}
    & \leq \sup\limits_{\sumstack{\norm{\beta}_{\ell^1}=1}{\supp \beta = I^\complement}} \sum\limits_{i\in\Z} |\beta_i|
      \sup\limits_{d\in \Ds(I^\complement)} \norm{(D P_I)^\dagger d}_{\ell^1}\\
    & = \sup\limits_{d\in \Ds(I^\complement)} \norm{(D P_I)^\dagger d}_{\ell^1}.
  \end{align*}
  This shows that 
  \[
    \norm{(D P_I)^\dagger D P_{I^\complement}}_{{\ell^1},{\ell^1}}
    = \sup\limits_{d\in \Ds(I^\complement)} \norm{(D P_I)^\dagger d}_{\ell^1}
    < 1 - 2\,\frac{\sup\limits_{i\in\Z}|\ip{\eta}{d_i}|}{\norm{P_I D^\ast s^k}_{\ell^\infty}}
  \]
  is another equivalent condition for exact recovery.
  The last thing we have to afford to finish the proof is an estimation for the term 
  $\norm{P_I D^\ast s^k}_{\ell^\infty}$ from below.

  In the first step this is easy, since $r^0 = v^\varepsilon$ resp.~$s^0=v$
  with $v=Ku=K\sum_{i\in I} \alpha_i e_i$. With that we get
  \begin{align*}
    \norm{P_I D^\ast s^0}_{\ell^\infty}
    & = \norm{P_I D^\ast v}_{\ell^\infty}
      = \sup\limits_{j\in I} |\ip{v}{d_j}|
    = \sup\limits_{j\in I} \Big|\sum\limits_{i\in I} \alpha_i \norm{Ke_i} \ip{d_i}{d_j} \Big|\\
    & \geq \Big|\sum\limits_{i\in I} \alpha_i \norm{Ke_i} \ip{d_i}{d_l} \Big| 
    	\geq |\alpha_l| \, \norm{Ke_l} \big(1-\sup\limits_{i\in I} \sum\limits_{\sumstack{j\in I}{j \neq i}} |\ip{d_i}{d_j}|\big)
  \end{align*}
  for all $l\in I$, hence in particular
  \[
    \norm{P_I D^\ast v}_{\ell^\infty}
    \geq 
    \min\limits_{i\in I} |\alpha_i| \, \norm{Ke_i} \big(1-\sup\limits_{i\in I} \sum\limits_{\sumstack{j\in I}{j \neq i}} |\ip{d_i}{d_j}|\big).
  \]

  To prove this for general $k$ we successively apply this estimation.
  Here, again, we get
  \begin{align*}
    \norm{P_I D^\ast s^k}_\infty
    & = \sup\limits_{j\in I} |\ip{s^k}{d_j}|
    = \sup\limits_{j\in I} \Big|\sum\limits_{i\in I} (\widehat{\alpha}_i^k - \alpha_i) \norm{Ke_i} \ip{d_i}{d_j}\Big| \\
    & \geq \Big|\sum\limits_{i\in I} (\widehat{\alpha}_i^k - \alpha_i) \norm{Ke_i} \ip{d_i}{d_l} \Big|
    \geq |\alpha_l| \, \norm{Ke_l} \big(1-\sup\limits_{i\in I} \sum\limits_{\sumstack{j\in I}{j \neq i}} |\ip{d_i}{d_j}|\big)
  \end{align*}
  for all $l\in I$, $l\notin I^k$, hence, since OMP never chooses an atom twice, in particular
  \[
    \norm{P_I D^\ast s^k}_{\ell^\infty}
    \geq
    \min\limits_{i\in I} |\alpha_i| \, \norm{Ke_i} \big(1-\sup\limits_{i\in I} \sum\limits_{\sumstack{j\in I}{j \neq i}} |\ip{d_i}{d_j}|\big).
  \]
\end{proof}

In particular, to ensure the $\varepsilon$ERC~(\ref{eq_ERCnoise}) one has necessarily
for the noise-to-signal-ratio $\nsr < 1/2$.
For a small noise-to-signal-ratio the $\varepsilon$ERC~(\ref{eq_ERCnoise})
approximates the ERC~(\ref{eq_ERC}).
A rough upper bound for $\sup_{i\in\Z} |\ip{\eta}{d_i}|$ 
is $\varepsilon$ and hence, one may use
\[
  \nsr \leq \frac{\varepsilon}{\min\limits_{i\in I} |\alpha_i| \norm{K e_i}}.
\]

Similar to  the noiseless case,
the $\varepsilon$ERC~(\ref{eq_ERCnoise}) is hard to evaluate.
Analogously to section~\ref{sec_ERC} we now give a
weaker sufficient recovery condition that depends on inner products of the dictionary atoms.
It is proved analogously to proposition~\ref{theorem_suf}.
\begin{proposition}[Neumann ERC in the Presence of Noise]\label{theorem_sufnoise}
  Let $\alpha \in \ell^0$ with $\supp \alpha = I$.
  Let $u = \sum_{i\in\Z} \alpha_i e_i$ be the source and
  $v^\varepsilon = K u + \eta$ the noisy data with noise level $\norm{\eta}\leq \varepsilon$ and
  noise-to-signal-ratio $\nsr < 1/2$.
  If the operator $K:B \to H$
  and the dictionary $\Es = \{e_i\}_{i\in\Z}$ fulfill
  the \emph{Neumann $\varepsilon$ERC}
  \begin{equation}\label{eq_sufnoise}
    \sup\limits_{i\in I} \sum\limits_{\sumstack{j\in I}{j \neq i}} |\ip{d_i}{d_j}|
    + \sup\limits_{i\in I^\complement} \sum\limits_{j\in I} |\ip{d_i}{d_j}|
    <1 - 2\,\nsr,
  \end{equation}
  then OMP recovers the support $I$ of $\alpha$ exactly.
\end{proposition}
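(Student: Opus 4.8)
The plan is to deduce the Neumann $\varepsilon$ERC~(\ref{eq_sufnoise}) from the $\varepsilon$ERC~(\ref{eq_ERCnoise}) of theorem~\ref{theorem_ERCnoise} by exactly the Neumann series estimate that turns theorem~\ref{theorem_ERC} into proposition~\ref{theorem_suf}. To fix notation I would abbreviate the two sums appearing in the hypothesis as
\[
  a := \sup\limits_{i\in I} \sum\limits_{\sumstack{j\in I}{j \neq i}} |\ip{d_i}{d_j}|,
  \qquad
  b := \sup\limits_{i\in I^\complement} \sum\limits_{j\in I} |\ip{d_i}{d_j}|,
\]
so that the Neumann $\varepsilon$ERC reads $a+b<1-2\,\nsr$. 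Since $b\geq 0$ this already forces $a<1-2\,\nsr<1$, so the standing assumption $\sup_{i\in I}\sum_{j\neq i}|\ip{d_i}{d_j}|<1$ of theorem~\ref{theorem_ERCnoise} holds automatically, and $\nsr<1/2$ as well. It therefore suffices to bound the left-hand side of~(\ref{eq_ERCnoise}) by $b/(1-a)$ and to observe that $a+b<1-2\,\nsr$ is equivalent to the resulting $\varepsilon$ERC.

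First I would factor the pseudoinverse. Because $I=\supp\alpha$ is finite, the restricted synthesis operator $DP_I$ carries the finite Gram operator $G_I := (DP_I)^\ast(DP_I)$ with entries $\ip{d_i}{d_j}$, $i,j\in I$, and the bound $a<1$ makes $G_I$ invertible (diagonal dominance / Gershgorin), so that $(DP_I)^\dagger = G_I^{-1}(DP_I)^\ast$. For $d=d_i$ with $i\in I^\complement$ this yields
\[
  \norm{(DP_I)^\dagger d_i}_{\ell^1}
  \leq \norm{G_I^{-1}}_{\ell^1,\ell^1}\,\norm{(DP_I)^\ast d_i}_{\ell^1},
\]
where $\norm{(DP_I)^\ast d_i}_{\ell^1}=\sum_{j\in I}|\ip{d_i}{d_j}|$, whose supremum over $i\in I^\complement$ is exactly $b$. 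To estimate $\norm{G_I^{-1}}_{\ell^1,\ell^1}$ I would split $G_I=\Id+(G_I-\Id)$ and use that the off-diagonal part $(\ip{d_i}{d_j})_{i\neq j}$ has $\ell^1\to\ell^1$ operator norm equal to its maximal absolute column sum, namely $a$; the Neumann series then gives $\norm{G_I^{-1}}_{\ell^1,\ell^1}\leq(1-a)^{-1}$. Taking the supremum over $d\in\Ds(I^\complement)$ produces $\sup_{d\in\Ds(I^\complement)}\norm{(DP_I)^\dagger d}_{\ell^1}\leq b/(1-a)$.

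To finish, I would multiply the target $\varepsilon$ERC inequality $b/(1-a)<1-2\,\nsr/(1-a)$ through by the positive factor $1-a$, reducing it to $b<1-a-2\,\nsr$, i.e.~$a+b<1-2\,\nsr$, which is precisely the Neumann $\varepsilon$ERC; the strict inequality is preserved, so theorem~\ref{theorem_ERCnoise} applies and OMP recovers $I$ exactly. The only genuinely delicate point is the Neumann series step, which needs both $a<1$ for convergence and the identification of the $\ell^1\to\ell^1$ operator norm with the maximal absolute column sum. Both are clean here: the finiteness of $I$ turns $G_I$ into an honest $N\times N$ matrix, and $a<1$ is guaranteed by the hypothesis, so no infinite-dimensional subtleties intervene.
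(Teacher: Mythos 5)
Your proposal is correct and follows exactly the route the paper intends: the paper only remarks that the proposition ``is proved analogously to proposition 2.2'' via the Neumann series estimate for $P_I D^\ast D P_I$, and your argument—bounding $\sup_{d\in\Ds(I^\complement)}\norm{(DP_I)^\dagger d}_{\ell^1}$ by $b/(1-a)$ through the Gram matrix and Neumann series, then checking that $a+b<1-2\,\nsr$ is equivalent to the $\varepsilon$ERC of theorem 3.1—is precisely that argument, with the details (invertibility of the Gram matrix, the column-sum identification of the $\ell^1\to\ell^1$ norm, and the automatic validity of the side condition $a<1$) correctly filled in.
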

\noindent

\begin{remark}
  The according suffient conditions for exact
  recovery with WOMP with weakness parameter $\omega\in(0,1]$
  for the case of noisy data with noise-to-signal-ratio $\nsr < \omega /2$
  are
  \[
    \sup\limits_{d\in \Ds(I^\complement)} \norm{(D P_I)^\dagger d}_{\ell^1}
    <\omega - 2 \, \nsr
    \frac{1}{1-\sup\limits_{i\in I} \sum\limits_{\sumstack{j\in I}{j \neq i}} |\ip{d_i}{d_j}|},
  \]
  and
  \[
    \sup\limits_{i\in I} \sum\limits_{\sumstack{j\in I}{j \neq i}} |\ip{d_i}{d_j}|
    + \frac{1}{\omega} \sup\limits_{i\in I^\complement} \sum\limits_{j\in I} |\ip{d_i}{d_j}|
    <1 - \frac{2\nsr}{\omega},
  \]
  analog to theorem~\ref{theorem_ERCnoise} and
  proposition~\ref{theorem_sufnoise}, respectively.
\end{remark}

Same as for the noiseless case we can give another even weaker condition in terms of the cumulative coherence:
\begin{proposition}\label{theorem_coherencenoise}
  Let $\alpha \in \ell^0$ with $\supp \alpha = I$.
  Let $u = \sum_{i\in\Z} \alpha_i e_i$ be the source and
  $v^\varepsilon = K u + \eta$ the noisy data with noise level $\norm{\eta}\leq \varepsilon$ and
  noise-to-signal-ratio $\nsr < 1/2$.
  If the operator $K:B \to H$
  and the dictionary $\Es = \{e_i\}_{i\in\Z}$ lead to dictionary $\Ds$
  which fulfills the condition
  \begin{equation}\label{eq_coherencenoise}
    \mu_1(N-1)
    + \mu_1(N)
    < 1 - 2\,\nsr,
  \end{equation}
  then OMP recovers the support $I$ of $\alpha$ exactly.
\end{proposition}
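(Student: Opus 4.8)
The plan is to obtain this condition as an immediate corollary of the Neumann $\varepsilon$ERC in Proposition~\ref{theorem_sufnoise}, in exactly the way Proposition~\ref{theorem_coherence} was derived from Proposition~\ref{theorem_suf} in the noiseless case. The key observation is that the two inner-product sums on the left-hand side of the Neumann $\varepsilon$ERC~(\ref{eq_sufnoise}) are each controlled by a cumulative coherence. Since $|I| = N$, for every fixed $i \in I$ the index set $\set{j\in I}{j\neq i}$ has exactly $N-1$ elements, so by the definition of $\mu_1$ we have
\[
  \sup_{i\in I} \sum_{\sumstack{j\in I}{j\neq i}} |\ip{d_i}{d_j}| \leq \mu_1(N-1),
\]
and likewise, since $I$ has $N$ elements and the outer supremum runs over $i \in I^\complement$ (hence $i \notin I$),
\[
  \sup_{i\in I^\complement} \sum_{j\in I} |\ip{d_i}{d_j}| \leq \mu_1(N).
\]
These are precisely the two estimates recorded immediately before Proposition~\ref{theorem_coherence}.

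Adding the two bounds, I would then argue that the hypothesis~(\ref{eq_coherencenoise}), namely $\mu_1(N-1) + \mu_1(N) < 1 - 2\,\nsr$, forces
\[
  \sup_{i\in I} \sum_{\sumstack{j\in I}{j\neq i}} |\ip{d_i}{d_j}| + \sup_{i\in I^\complement} \sum_{j\in I} |\ip{d_i}{d_j}| \leq \mu_1(N-1) + \mu_1(N) < 1 - 2\,\nsr,
\]
which is exactly the Neumann $\varepsilon$ERC~(\ref{eq_sufnoise}). Since the noise-to-signal-ratio assumption $\nsr < 1/2$ is already part of the hypotheses, Proposition~\ref{theorem_sufnoise} applies verbatim and yields that OMP recovers the support $I$ of $\alpha$ exactly. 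This closes the argument.

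There is essentially no hard step here: the proof is a pure monotonicity argument that replaces the support-dependent inner-product sums by their worst-case cumulative-coherence bounds, trading the sharper but opaque condition~(\ref{eq_sufnoise}) for one expressed solely through $\mu_1$. The only point requiring minor care is the bookkeeping of cardinalities---using $\mu_1(N-1)$ (not $\mu_1(N)$) for the self-correlation term because the diagonal index $i$ is excluded, and $\mu_1(N)$ for the cross term because the full support $I$ of size $N$ is summed over. As already noted after Proposition~\ref{theorem_coherence}, the resulting condition is weaker still than the Neumann $\varepsilon$ERC and, for the typically compact operators arising in ill-posed problems, may be far too pessimistic, since $\mu_1(N)$ can approach or exceed one; so while the implication is immediate, the practical value of~(\ref{eq_coherencenoise}) is limited compared with~(\ref{eq_ERCnoise}) and~(\ref{eq_sufnoise}).
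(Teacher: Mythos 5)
Your proof is correct and follows exactly the route the paper intends: the two cumulative-coherence bounds stated just before Proposition~\ref{theorem_coherence} reduce condition~(\ref{eq_coherencenoise}) to the Neumann $\varepsilon$ERC~(\ref{eq_sufnoise}), and Proposition~\ref{theorem_sufnoise} then gives exact support recovery. The paper gives no separate proof for this statement precisely because the argument is this immediate reduction, so your write-up matches it in substance.
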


Remark that theorem~\ref{theorem_ERCnoise} and
proposition~\ref{theorem_sufnoise} just ensure the correct support
$I$. The following simple proposition shows that the reconstruction
error is of the order of the noise level.
\begin{proposition}[Error bounds for OMP in presence of noise]
  \label{prop_error_bound_omp}
  Let $\alpha \in \ell^0$ with $\supp \alpha = I$.
  Let $u = \sum_{i\in\Z} \alpha_i e_i$ be the source and
  $v^\varepsilon$ the noisy data
  with noise level $\norm{v^\varepsilon - v} \leq \varepsilon$ and
  noise-to-signal-ratio $\nsr < 1/2$.
  If the $\varepsilon$ERC is fulfilled then
  there exists a constant $C>0$ such that 
  for the approximative solution $\widehat{\alpha}$
  determined by OMP
  the following error bound holds,
  \[
    \norm{\widehat{\alpha} - \alpha}_{\ell^1}
    \leq C \varepsilon.
  \]
\end{proposition}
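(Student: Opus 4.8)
The plan is to leverage the fact that the $\varepsilon$ERC already settles the combinatorial part of the problem via theorem~\ref{theorem_ERCnoise}: since the $\varepsilon$ERC holds, OMP terminates with an estimate $\widehat{u} = \sum_{i\in I}\widehat{\alpha}_i e_i$ whose support is contained in $I$, so the only remaining task is to control the amplitude error of these finitely many active coefficients. The termination rule guarantees that the final residual satisfies $\norm{r}\leq\varepsilon$. Writing
\[
  r = v^\varepsilon - K\widehat{u} = (Ku + \eta) - K\widehat{u} = K(u-\widehat{u}) + \eta
\]
and using $\norm{\eta}\leq\varepsilon$, the triangle inequality immediately gives the range-side bound $\norm{K(u-\widehat{u})}_H \leq \norm{r} + \norm{\eta} \leq 2\varepsilon$.

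Next I would transfer this Hilbert-space estimate into an $\ell^1$ bound on the coefficient error. Setting $\gamma_i := (\alpha_i-\widehat{\alpha}_i)\norm{Ke_i}$ for $i\in I$ (with $\widehat{\alpha}_i:=0$ for indices in $I$ not selected), one has $K(u-\widehat{u}) = \sum_{i\in I}\gamma_i d_i = DP_I\gamma$. The key ingredient is a lower bound for $DP_I$ on sequences supported in $I$. The restricted Gram matrix $P_I D^\ast D P_I$ has unit diagonal (since $\norm{d_i}=1$) and off-diagonal row sums bounded by $\sup_{i\in I}\sum_{j\in I,\,j\neq i}|\ip{d_i}{d_j}|$, which is strictly below $1$ by hypothesis of the $\varepsilon$ERC. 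Gershgorin's circle theorem then yields $\lambda_{\min}\geq 1-\sup_{i\in I}\sum_{j\in I,\,j\neq i}|\ip{d_i}{d_j}|>0$ for its smallest eigenvalue, so that
\[
  \norm{DP_I\gamma}_H^2 = \ip{\gamma}{P_I D^\ast D P_I\,\gamma} \geq \lambda_{\min}\,\norm{\gamma}_{\ell^2}^2.
\]
Because $|I|=N<\infty$, the finite-dimensional norm equivalence gives $\norm{\gamma}_{\ell^1}\leq\sqrt{N}\,\norm{\gamma}_{\ell^2}$, and undoing the substitution ($|\alpha_i-\widehat{\alpha}_i|=|\gamma_i|/\norm{Ke_i}$) yields $\norm{\widehat{\alpha}-\alpha}_{\ell^1}\leq (\min_{i\in I}\norm{Ke_i})^{-1}\,\norm{\gamma}_{\ell^1}$.

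Chaining the three estimates produces
\[
  \norm{\widehat{\alpha}-\alpha}_{\ell^1}
  \leq \frac{2\sqrt{N}}{\sqrt{\lambda_{\min}}\,\min_{i\in I}\norm{Ke_i}}\;\varepsilon
  =: C\,\varepsilon,
\]
which is the asserted bound, with a constant $C$ depending only on the sparsity $N$, the minimal active norm $\min_{i\in I}\norm{Ke_i}$, and the conditioning constant $\lambda_{\min}$ of the active atoms. The main obstacle is the middle step, namely showing that $DP_I$ is bounded below on $\spa\Es(I)$: this is precisely where the side condition $\sup_{i\in I}\sum_{j\neq i}|\ip{d_i}{d_j}|<1$ of the $\varepsilon$ERC is indispensable, since without it the restricted Gram matrix could be singular and the recovered amplitudes would not be controlled by the residual measured on the range side. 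Everything else reduces to the triangle inequality and a finite-dimensional norm comparison.
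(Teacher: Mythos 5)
Your proof is correct, and it reaches the bound by a genuinely different route than the paper. The paper's own argument stays entirely on the coefficient side: once theorem~\ref{theorem_ERCnoise} guarantees that the active set is $I$, both $\alpha$ and $\widehat{\alpha}$ are images of $v$ and $v^\varepsilon$ under the pseudoinverse of $A:\check\alpha\mapsto\sum_{i\in I}\check\alpha_i Ke_i$, so $\norm{\widehat{\alpha}-\alpha}_{\ell^1}=\norm{A^\dagger(v^\varepsilon-v)}_{\ell^1}\leq\norm{A^\dagger}_{H,\ell^1}\,\varepsilon$, with the constant left implicit as the operator norm of $A^\dagger$. You instead work on the range side: the stopping rule plus the triangle inequality give $\norm{K(u-\widehat{u})}_H\leq 2\varepsilon$, which you then pull back to the coefficients through an explicit coercivity bound $\lambda_{\min}\geq 1-\sup_{i\in I}\sum_{j\in I,\,j\neq i}|\ip{d_i}{d_j}|>0$ for the restricted Gram matrix (Gershgorin) and the finite-dimensional comparison $\norm{\cdot}_{\ell^1}\leq\sqrt{N}\,\norm{\cdot}_{\ell^2}$. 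Your route buys an explicit constant $C=2\sqrt{N}\big(\sqrt{\lambda_{\min}}\,\min_{i\in I}\norm{Ke_i}\big)^{-1}$, a proof that the active atoms are linearly independent (so the least-squares coefficients are well defined --- something the paper's appeal to $A^\dagger$ tacitly assumes), and robustness to the case where OMP stops before exhausting $I$, since you only need $\supp\widehat{\alpha}\subseteq I$ rather than equality. The price is a superfluous factor of $2$ (the paper's identity uses only $\norm{v^\varepsilon-v}\leq\varepsilon$, not the residual) and the extra $\sqrt{N}$ from the norm equivalence; both arguments ultimately rest on exact support recovery together with invertibility of the restricted synthesis operator.
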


\begin{proof}
  Since the $\varepsilon$ERC is fulfilled OMP recovered the correct support $I$, i.e.
  \[
    \widehat{\alpha} = 
    \argmin \Set{\norm{v^\varepsilon - \textstyle\sum_{i\in I}\check\alpha_i Ke_i}}{\check\alpha \in \ell^2(I)}.
  \]
  With the help of the operator $A:\ell^1(I)\to H$ defined by $A\alpha = \sum_{i\in I}\alpha_i Ke_i$ this is equivalently written as
  \[
   A^* A\widehat{\alpha} = A^* v^\varepsilon.
  \]
  Note that $A^*A:\ell^2(I)\to \ell^2(I)$ is just the matrix
  \[
  (A^* A)_{i,j} = \ip{Ke_i}{Ke_j}.
  \]
  For the error we get
  \[
    \norm{\widehat{\alpha} - \alpha}_{\ell^1}
    =  \norm{A^\dagger(v^\varepsilon - v)}_{\ell^1}
    \leq \norm{A^\dagger}_{H,\ell^1} \, \norm{(v^\varepsilon - v)}_H
    = C\varepsilon.
  \]
\end{proof}

\begin{remark}\label{remark_bp_noise}
  We remark again on an exact recovery condition for BP.
  Unlike the section~\ref{sec_ERC} where the results
  can be transfered to BP, see remark~\ref{remark_bp}, 
  this is not possible for
  the presence of noise:
  To prove theorem~\ref{theorem_ERCnoise}
  we used properties of the OMP algorithm which are not valid
  for BP.

  For the case of noisy data $v^\varepsilon$
  in~\cite{donoho2004stablesparse} an exact recovery condition for BP is derived.
  This condition depends on the coherence parameter $\mu$.
  Since in this paper the focus is on the \emph{greedy} solution of inverse problems
  we give up on deriving stronger results for BP which are closer to
  the results of this section.
\end{remark}

\section{Resolution Bounds for Mass Spectrometry}\label{sec_massspectro}

Granted, to apply the Neumann conditions of
proposition~\ref{theorem_suf} and proposition~\ref{theorem_sufnoise},
respectively, one has to know the support $I$.
In this case there would be no need to apply OMP---one may just
solve the restricted least squares problem, i.e.~project onto $\spa\Es(I)$.
For deconvolution problems, however, with certain prior knowledge the Neumann ERC~(\ref{eq_suf})
resp.~Neumann $\varepsilon$ERC~(\ref{eq_sufnoise})
are easier to evaluate than the ERC~(\ref{eq_ERC}) resp.~$\varepsilon$ERC~(\ref{eq_ERCnoise}) especially when the support $I$ is not known exactly.
In the following we will use the weaker conditions exemplarily
with impulse trains convolved with Gaussian kernel
as e.g.~occurs in mass spectrometry, cf.~\cite{klann2007shrink_vs_deconv}.

\subsection{Analysis}
In mass spectrometry the source $u$ is given---after
simplification---as sum of Dirac peaks at integer positions $i\in\Z$,
\[
  u = \sum\limits_{i\in \Z} \alpha_i \, \delta(\cdot - i),
\]
with $|\supp \alpha| = |I| = N$. Since the measuring procedure is influenced by
Gaussian noise the measured data can be modeled by a convolution operator $K$ with 
Gaussian kernel
\begin{equation}
  \label{eq:gaussian_kernel}
  \kappa(x) = \frac{1}{\pi^{1/4}\sigma^{1/2}} \exp \Big( -\frac{x^2}{2\sigma^2} \Big),
\end{equation}
i.e.~the operator under consideration is $Ku = \kappa \ast u$.  As Banach
space $B$ we may use the space $\mathcal{M}$ of regular Borel measures
on $\R$ (which contains impulse trains if the coefficients $\alpha_i$
are summable) and as Hilbert space $H$ the space $L^2(\R)$.
We form the dictionary $\Es$ of Dirac peaks at integer positions and hence, we have $\Ds = \{\delta(\cdot - i) \ast \kappa\} = \{\kappa(\cdot-i)\}$,
since $\norm{\kappa(\cdot-i)}_{L^2} = 1$, $i\in\Z$.

To verify the Neumann ERC~(\ref{eq_suf})
and Neumann $\varepsilon$ERC~(\ref{eq_sufnoise})
respectively, we need the autocorrelation of two atoms $\kappa(\cdot-i)$
and $\kappa(\cdot-j)$. In $L^2(\R,\R)$ it arises as
\begin{equation}
  \label{eq:correlation_two_gaussians}
  \ip{\kappa(\cdot-i)}{\kappa(\cdot-j)}_{L^2}
  = \int\limits_{\R} \; \tfrac{1}{\sqrt{\pi}\sigma} \exp \big( -\tfrac{(x-i)^2}{2\sigma^2} \big)
    \exp \big( -\tfrac{(x-j)^2}{2\sigma^2} \big) \rd x
  = \exp\big(- \tfrac{(i-j)^2}{4\sigma^2} \big),
\end{equation}
which is positive and monotonically decreasing in the distance $|i-j|$.
If we additionally assume that the peaks of any source $u$ have the minimal distance
\[
  \rho := \min\limits_{i,j\in\supp\alpha} |i-j|,
\]
then w.l.o.g. we can estimate the sums of correlations from above as
follows.
%
%
For $\rho\in\N$ we get for the correlations of support atoms
\begin{align*}
  \sup\limits_{i\in I} \sum\limits_{\sumstack{j\in I}{j \neq i}} |\ip{d_i}{d_j}|
      & \leq 2 \sum\limits_{j=1}^{\lfloor N/2 \rfloor} \ip{\kappa}{\kappa(\cdot-j\rho)}
      = 2 \sum\limits_{j=1}^{\lfloor N/2 \rfloor} \exp\big(- \tfrac{(j\rho)^2}{4\sigma^2} \big).
\intertext{For the correlations of support atoms and non-support atoms we have to
distinguish between two cases for $\rho$. For $\rho \geq 2$ we get}
  \sup\limits_{i\in I^\complement} \sum\limits_{j\in I} |\ip{d_i}{d_j}|
      & \leq \sup\limits_{1\leq i<\rho} \sum\limits_{j=-\lfloor N/2 \rfloor}^{\lfloor N/2 \rfloor} \ip{\kappa(\cdot-i)}{\kappa(\cdot-j\rho)}
      = \sup\limits_{1\leq i<\rho} \sum\limits_{j=-\lfloor N/2 \rfloor}^{\lfloor N/2 \rfloor} \exp\big(- \tfrac{(i-j\rho)^2}{4\sigma^2} \big)
\intertext{and for $\rho=1$}
  \sup\limits_{i\in I^\complement} \sum\limits_{j\in I} |\ip{d_i}{d_j}|
      & \leq 2 \sum\limits_{j=1}^{\lfloor N/2 \rfloor+1} \ip{\kappa}{\kappa(\cdot-j)}
      = 2 \sum\limits_{j=1}^{\lfloor N/2 \rfloor+1} \exp\big(- \tfrac{j^2}{4\sigma^2} \big).
\end{align*}

With that we can formulate the Neumann ERC and the
Neumann $\varepsilon$ERC for Dirac peaks convolved with Gaussian kernel.
\begin{proposition}\label{theorem_gauss}
  An estimation from above for the ERC (i.e. $\nsr=0$) and
  $\varepsilon$ERC (i.e. $0<\nsr<\frac{1}{2}$)
  for Dirac peaks convolved with Gaussian kernel is for $\rho\geq 2$
  \begin{align*}
    2 \sum\limits_{j=1}^{\lfloor N/2 \rfloor} \exp\Big(- \frac{(j\rho)^2}{4\sigma^2} \Big)
    + \sup\limits_{1\leq i<\rho} \sum\limits_{j=-\lfloor N/2 \rfloor}^{\lfloor N/2 \rfloor} \exp\Big(- \frac{(i-j\rho)^2}{4\sigma^2} \Big)
    < 1 - 2\,\nsr,
  \end{align*}
  and for $\rho=1$
  \[
    2 \sum\limits_{j=1}^{\lfloor N/2 \rfloor} \exp\Big(- \frac{j^2}{4\sigma^2} \Big)
    + 2 \sum\limits_{j=1}^{\lfloor N/2 \rfloor+1} \exp\Big(- \frac{j^2}{4\sigma^2} \Big)
    < 1 - 2\,\nsr.
  \]
  This means that we are able to recover the support of the impulse
  train with OMP exactly from the convolved data if the above
  conditions are fulfilled.
\end{proposition}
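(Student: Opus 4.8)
The plan is to reduce both supremum-sums appearing in the Neumann (ε)ERC, conditions~(\ref{eq_suf}) and~(\ref{eq_sufnoise}), to explicit sums of Gaussians, and then to invoke Proposition~\ref{theorem_suf} (resp. Proposition~\ref{theorem_sufnoise}). The starting point is the closed form~(\ref{eq:correlation_two_gaussians}): every inner product $\ip{d_i}{d_j}$ equals $\exp(-(i-j)^2/(4\sigma^2))$, so in particular all inner products are positive and the modulus in the Neumann conditions may be dropped. Moreover this quantity is strictly decreasing in the distance $|i-j|$, and this monotonicity is essentially the only structural fact I use: it lets me replace the (unknown) support $I$ by its worst, i.e. maximally packed, configuration.

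For the first term, fix $i\in I$ and bound $\sum_{j\in I,\,j\neq i}\exp(-(i-j)^2/(4\sigma^2))$. Since the summand decreases in $|i-j|$ and the support points are integers at mutual distance at least $\rho$, the sum over any admissible $I$ is dominated by the one obtained when the other $N-1$ points are pushed as close to $i$ as the spacing $\rho$ allows, i.e. they sit at $i\pm\rho, i\pm2\rho,\dots$. Distributing these neighbours on the two sides of $i$, the worst reference atom is the central one, which sees at most $\lfloor N/2\rfloor$ neighbours on each side; the neighbour at the $k$-th position contributes $\exp(-(k\rho)^2/(4\sigma^2))$. Summing over both sides yields
\[
  \sup_{i\in I}\sum_{\sumstack{j\in I}{j\neq i}}|\ip{d_i}{d_j}|
  \leq 2\sum_{j=1}^{\lfloor N/2\rfloor}\exp\Big(-\frac{(j\rho)^2}{4\sigma^2}\Big).
\]

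For the second term, fix a non-support index $i\in I^\complement$. By translation invariance I may assume the support lies on the lattice $\rho\Z$, so that $I=\{j\rho : |j|\leq\lfloor N/2\rfloor\}$ is the worst packing; the correlation profile then depends only on the residue of $i$ modulo $\rho$, so for $\rho\geq2$ the supremum over $I^\complement$ is attained for some $i\in\{1,\dots,\rho-1\}$, giving $\sup_{1\leq i<\rho}\sum_{j=-\lfloor N/2\rfloor}^{\lfloor N/2\rfloor}\exp(-(i-j\rho)^2/(4\sigma^2))$. The case $\rho=1$ is special: here the support may form a solid block of consecutive integers and the worst non-support atom sits inside a one-point gap of that block, seeing support points at distances $1,2,3,\dots$ on both sides, the two-sided count running up to $\lfloor N/2\rfloor+1$ because of the possible asymmetry of the gap within the block. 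This produces $2\sum_{j=1}^{\lfloor N/2\rfloor+1}\exp(-j^2/(4\sigma^2))$.

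Adding the two estimates bounds the left-hand side of the Neumann εERC~(\ref{eq_sufnoise}) from above by the Gaussian expressions in the statement; hence whenever those expressions are $<1-2\,\nsr$, the hypothesis of Proposition~\ref{theorem_sufnoise} holds and OMP recovers $I$ exactly, the noiseless ERC~(\ref{eq_suf}) being the special case $\nsr=0$ covered by Proposition~\ref{theorem_suf}. The geometric bookkeeping is routine; the one step that really needs care is the \emph{worst-case packing} claim, namely that among all integer supports with minimal gap $\rho$ the equally spaced lattice configuration simultaneously maximizes both supremum-sums. This is where the strict monotonicity of the Gaussian correlation in $|i-j|$ is essential, together with a rearrangement argument ensuring that contracting any gap toward $\rho$ and centering the reference atom can only increase the sums; the slightly delicate endpoint counting ($\lfloor N/2\rfloor$ versus $\lfloor N/2\rfloor+1$) is the other point to get right.
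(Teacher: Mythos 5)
Your proposal is correct and follows essentially the same route as the paper: the paper likewise computes the Gaussian correlation~(\ref{eq:correlation_two_gaussians}), uses its positivity and monotone decay in $|i-j|$ to bound the two supremum-sums by the maximally packed, equally spaced configuration (with the separate $\rho=1$ and $\rho\geq 2$ counts), and then inserts these bounds into proposition~\ref{theorem_suf} resp.\ proposition~\ref{theorem_sufnoise}. If anything, you are more explicit about the worst-case packing and endpoint bookkeeping, which the paper dispatches with a ``w.l.o.g.''
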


\begin{remark}
  Remark that the case $\rho=1$ of proposition~\ref{theorem_gauss}
  coincides more or less with the recovery condition in terms of the
  cumulative coherence, since for odd $N$ we get
  $\mu_1(N) = 2 \, \sum_{j=1}^{N/2} \exp(- j^2 / (4\sigma^2))$.
  Summing up just over a subset of $\rho\Z:=\set{j\in\Z}{j/\rho \in\Z}$ is not 
  a feasible estimation, since for the support $I$
  we allow any point $i\in\Z$ and not only atoms of the sub-dictionary
  $\Ds(\rho\Z)$. This turns out to be the main disadvantage of the coherence condition:
  It does not distinguish between support and non-support atoms
  and is hence in some applications a clearly weaker estimation.
\end{remark}

\begin{remark}
  If the cardinality of the support $N$ is unknown one could replace
  the finite sums by infinite sums.  Obviously these sums exist since
  the geometric series is a majorizing series.  
  With $\iota$ representing the imaginary unit
  they can be expressed
  in terms of the Jacobi theta function of the third kind,
  $\jacobi(z,q) := \sum_{j=-\infty}^\infty q^{j^2} \exp (2 j \iota z)$.
\end{remark}

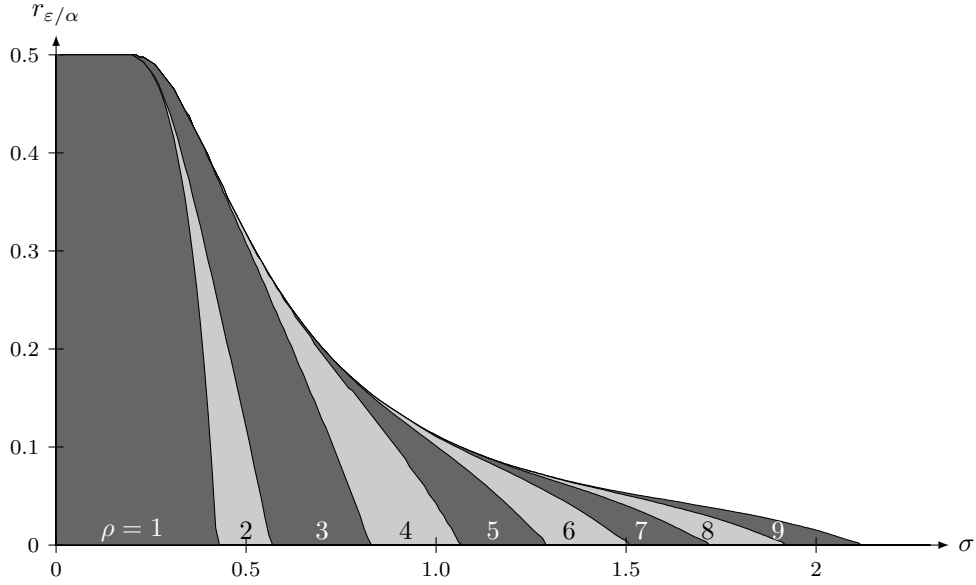
\begin{figure}[ht]
  \begin{tikzpicture}[xscale=5,yscale=13]
    \filldraw[fill=black!60!white] plot  file {./ERC_gauss9.dat} |- (0,0) |- (0,0.5);
    \filldraw[fill=black!20!white] plot  file {./ERC_gauss8.dat} |- (0,0) |- (0,0.5);
    \filldraw[fill=black!60!white] plot  file {./ERC_gauss7.dat} |- (0,0) |- (0,0.5);
    \filldraw[fill=black!20!white] plot  file {./ERC_gauss6.dat} |- (0,0) |- (0,0.5);
    \filldraw[fill=black!60!white] plot  file {./ERC_gauss5.dat} |- (0,0) |- (0,0.5);
    \filldraw[fill=black!20!white] plot  file {./ERC_gauss4.dat} |- (0,0) |- (0,0.5);
    \filldraw[fill=black!60!white] plot  file {./ERC_gauss3.dat} |- (0,0) |- (0,0.5);
    \filldraw[fill=black!20!white] plot  file {./ERC_gauss2.dat} |- (0,0) |- (0,0.5);
    \filldraw[fill=black!60!white] plot  file {./ERC_gauss1.dat} |- (0,0) |- (0,0.5);
    \draw[white] (0.2, 0.015) node {$\rho=1$};
    \draw[black] (0.5, 0.015) node {2};
    \draw[white] (0.7, 0.015) node {3};
    \draw[black] (0.92, 0.015) node {4};
    \draw[white] (1.15, 0.015) node {5};
    \draw[black] (1.35, 0.015) node {6};
    \draw[white] (1.54, 0.015) node {7};
    \draw[black] (1.715, 0.015) node {8};
    \draw[white] (1.9, 0.015) node {9};
    \foreach \x in {0,0.5,1.0,1.5,2}
      \draw (\x,0.008) -- (\x,-0.008) node[below] {\footnotesize ${\x}$};
    \foreach \y in {0,0.1,0.2,0.3,0.4,0.5}
      \draw (0.02,\y) -- (-0.02,\y) node[left] {\footnotesize ${\y}$};
    \begin{scope}[>=latex]
      \draw[->] (0,0) -- (2.35,0) node[right] {$\sigma$};
      \draw[->] (0,0) -- (0,0.52) node[above] {$\nsr$};
    \end{scope}
  \end{tikzpicture}
  \caption{$\varepsilon$ERC for combinations of $\sigma$, $\rho$ and $\nsr$.}
  \label{bild_epsERC}
\end{figure}

The condition of proposition~\ref{theorem_gauss} is plotted
for some combinations of $\sigma$, $\rho$ and $\nsr$
with unknown $N$
in figure~\ref{bild_epsERC}.
The colored areas describe the combinations
where the Neumann ERC is fulfilled.

Often for deconvolution problems  the autocorrelation of two atoms 
$|\ip{d(\cdot-i)}{d(\cdot-j)}|$ is not
monotonically decreasing in the distance $|i-j|$
and it obviously depends on the kernel $\kappa$.
However, if the correlation of two atoms can be estimated from above
via a monotonically decreasing function w.r.t.~an appropriate distance
then we can use a similar estimate.
We do this exemplarily for an oscillating kernel in section~\ref{sec_holography}
namely, for Fresnel-convolved characteristic functions as appear in
digital holography.

\begin{remark}
  \label{rem_continuous_omp}
  We remark on a possible fully continuous formulation of OMP.
  We assume that we are given some data
  \[
  v = \kappa * u = \sum\limits_{i\in\Z} \alpha_i \, \kappa(\cdot-x_i)
  \]
  and that we do not know the positions $x_i$. We allow our
  dictionary to be uncountable, i.e.~we search for peaks at every real
  number. Note that here $i\in\Z$ does not represent the
  set of possible positions for peaks but it is an index set for continuous positions
  $x_i\in\R$.

  In the first step of the matching pursuit we correlate $v$
  with $\kappa(\cdot-x)$ and take that $x$ which gives maximal correlation.
  In the special case of the Gausssian blurring
  kernel~(\ref{eq:gaussian_kernel}) this amounts in finding the maximum
  of the function
  \[
  f(x) = |\ip{v}{\kappa(\cdot-x)}| = \sum\limits_{i\in\Z} \alpha_i
  \ip{\kappa(\cdot-x_i)}{\kappa(\cdot-x)}.
  \]
  From~\eqref{eq:correlation_two_gaussians} we see that this is
  \[
  f(x) = \sum\limits_{i\in\Z} \alpha_i \exp\Big(-\frac{(x-x_i)^2}{4\sigma^2} \Big).
  \]
  It is clear that any maximum of $f$ is unlikely to be precisely
  at some of the $x_i$'s, albeit very close.  A detailed study of this
  effect goes beyond the scope of this paper and we present a simple
  example.
  
  Let us assume that we have two peaks, one at 0 and one at $x_1$:
  \begin{equation}
    \label{eq:example_continuous_mp}
    u = \alpha_0 \delta(\cdot) + \alpha_1\delta(\cdot-x_1).
  \end{equation}
  Moreover, we assume that $\alpha_0>\alpha_1$, i.e.~the peak in zero
  is higher. The matching pursuit will find the first peak at the
  maximum of the function
  \[
  f(x) = \alpha_0 \exp\Big(-\frac{x^2}{4\sigma^2}\Big) +
  \alpha_1\exp\Big(-\frac{(x-x_1)^2}{4\sigma^2}\Big)
  \]
  and hence at some root of
  \[
  f'(x) = -\tfrac{1}{2\sigma^2}\Big(\alpha_0 x
  \exp\big(-\tfrac{x^2}{4\sigma^2}\big) + \alpha_1
  (x-x_1)\exp\big(-\tfrac{(x-x_1)^2}{4\sigma^2}\big)\Big).
  \]
  The error, that the matching pursuit makes is hence the error
  $\varrho$ in the root of $f'$ near zero.  In
  figure~\ref{fig:sigma_delta} it is shown, how the root of $f'$ close
  to zero depends on the variance $\sigma$.  One observes that the
  error $\varrho$ is smaller than the variance $\sigma$ by some orders
  of magnitude.

  As a final remark we mention that we measured the error not in some
  norm but only the distance of the $\delta$-peaks. This corresponds
  to the so called \emph{Prokhorov-metric} which is a metrization for
  the weak-* convergence in measure space.
\end{remark}

  
  
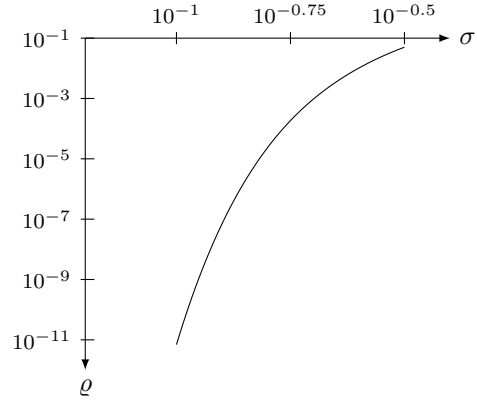
\begin{figure}
  \centering
  \begin{tikzpicture}[xscale=6,yscale=.4]
  \begin{scope}[>=latex]
    \draw[->] (-1.2,-1) -- (-.4,-1) node[right] {$\sigma$};
    \draw[->] (-1.2,-1) -- (-1.2,-12) node[below] {$\varrho$};  
  \end{scope}
    \foreach \x in {-1,-0.75,-0.5}
      \draw (\x,-1.2) -- (\x,-.8) node[above] {\footnotesize $10^{\x}$};
    \foreach \y in {-1,-3,...,-11}
      \draw (-1.18,\y) -- (-1.21,\y) node[left] {\footnotesize $10^{\y}$};
    \draw plot  file {./sigma_delta.dat};
  \end{tikzpicture}
  \caption{Error of the first step of the matching pursuit for the
    signal~(\ref{eq:example_continuous_mp}) with $\alpha_0=2$,
    $\alpha_1=1$ and $x_1=1$. The variable $\sigma$ is the variance of
    the Gaussian kernel and $\varrho$ is the position at which the
    matching pursuit locates the first peak.}
  \label{fig:sigma_delta}
\end{figure}

\subsection*{Numerical Examples}
We apply the Neumann $\varepsilon$ERC of proposition~\ref{theorem_gauss}
to simulated data of an isotope pattern.
Here the data consist of equidistant peaks with different heights.
In our example we use four peaks with a distance of $\rho=5$ and heights 
of $130$, $220$, $180$ and $90$, cf. the balls at the top of figure~\ref{bild_simulation}.
After convolving with Gaussian kernel with $\sigma=1.125$ we apply a
Poisson noise model. This is realistic,
because in mass spectrometry a finite number of particles is counted.

In the first example with low noise (mean and variance of 1.5 for regions without peaks)
the Neumann $\varepsilon$ERC is fulfilled and hence OMP recovered the
support exactly, see middle of figure~\ref{bild_simulation}.
However, the condition is restrictive: For the second example the signal
is disturbed with huge noise (mean and variance of 30 for regions without peaks)
and the Neumann $\varepsilon$ERC is not fulfilled.
Certainly, OMP recovered the support exactly,
see bottom of figure~\ref{bild_simulation}.

\begin{figure}[p]
  \begin{center}
  \includegraphics[width=0.7\linewidth]{./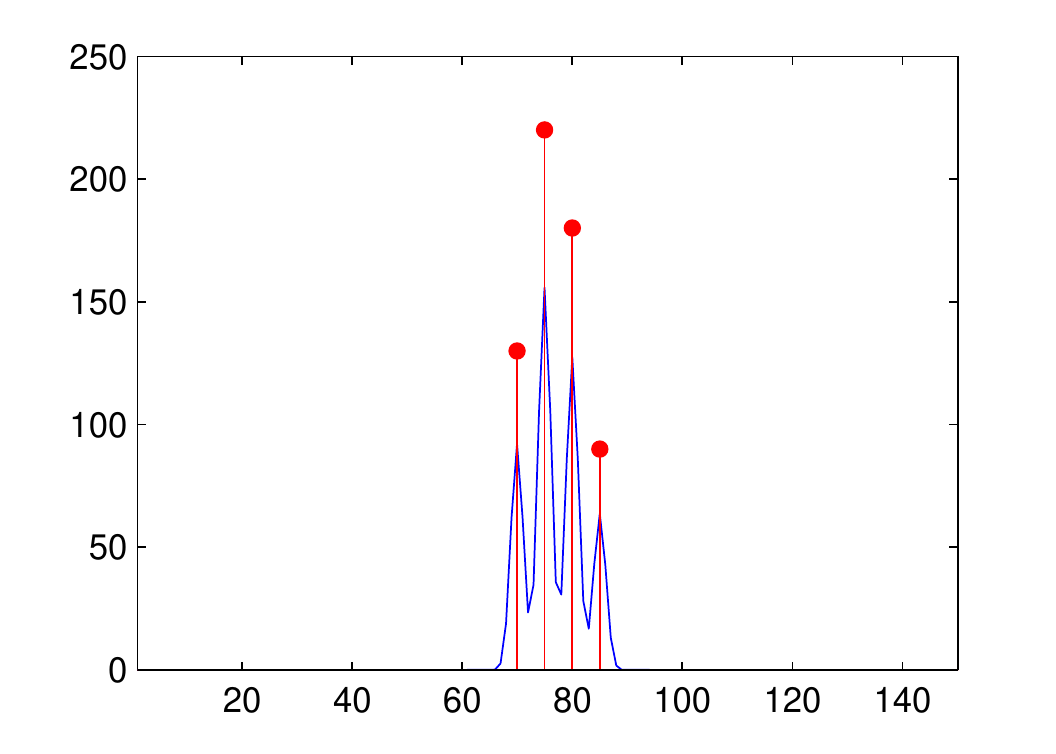}

  \includegraphics[width=0.7\linewidth]{./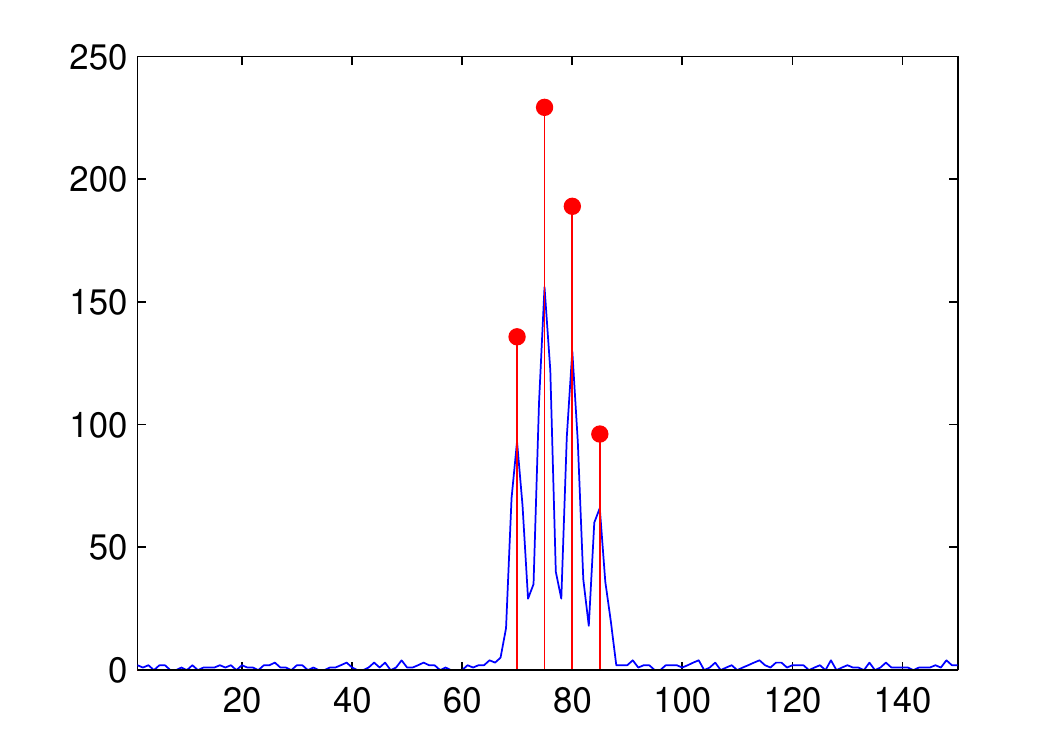}

  \includegraphics[width=0.7\linewidth]{./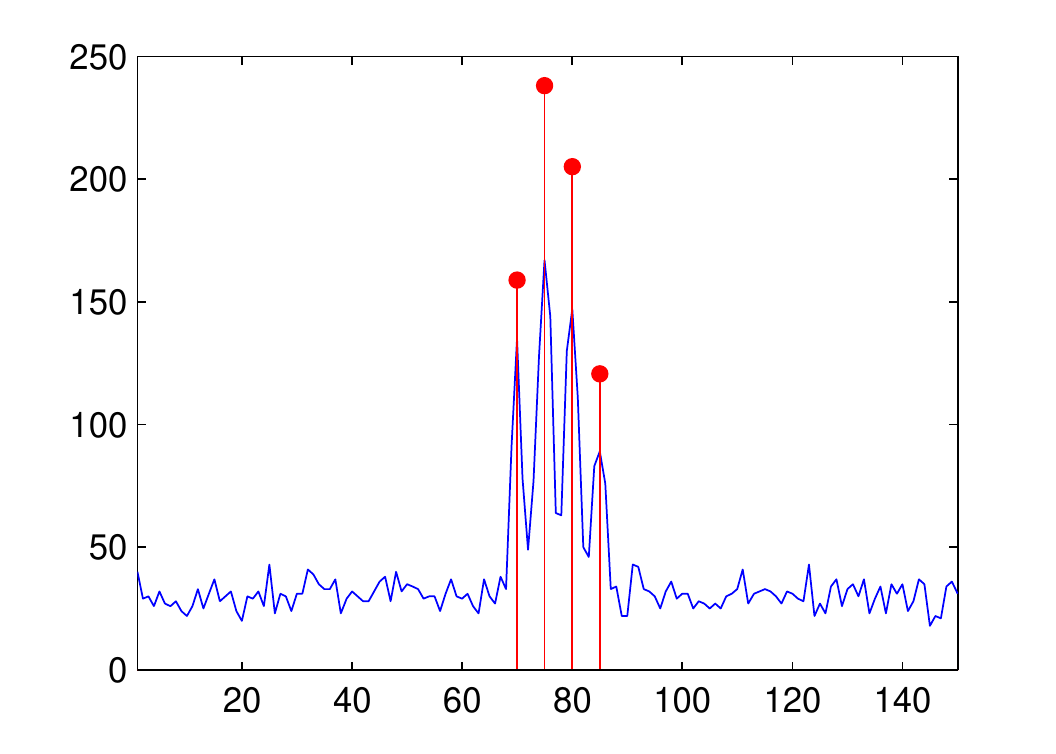}
  \end{center}
  \caption{Simulated isotope pattern. Top: support and Gaussian-convolved data without noise.
    Middle: low noise, Neumann $\varepsilon$ERC satisfied.
    Bottom: high noise, Neumann $\varepsilon$ERC not satisfied but still exact recovery possible.}
  \label{bild_simulation}
\end{figure}

\section{Resolution Bounds for Digital Holography}\label{sec_holography}
In digital holography, the data correspond to the diffraction patterns of the objects~\cite{goodman2005ifo,kreis2005hhi}.
Under Fresnel's approximation, diffraction can be modeled by a convolution with a ``chirp'' kernel.
In the context of holograms of particles~\cite{vikram1992pfh,hinsch1995tdp,hinsch2002hpi}, 
the objects can be considered opaque (i.e., binary) and the hologram recorded on the camera
 corresponds to the convolution of disks with Fresnel's chirp kernels. The measurement of
particle size and location therefore amounts to an inverse problem~\cite{soulez2007holography,soulez2007holography2}.

\subsection*{Analysis}
We consider the case of spherical particles
which is of significant interest in
applications such as fluid mechanics~\cite{zeff2003mir,meng2004hpi}.
We model the particles $j\in\{1,\ldots,N\}$
as opaque disks $B_r(\cdot-x_j,\cdot-y_j,\cdot-z_j)$
with center $(x_j,y_j,z_j)\in\R^3$, radius $r$ and disk orientation orthogonal to the optical axis $(Oz)$. 
Hence the source $u$ is given as a sum of characteristic functions
\[
  u = \sum\limits_{j=1}^N \alpha_j \, \chi_{B_r}(\cdot-x_j,\cdot-y_j,\cdot-z_j)
    =: \sum\limits_{j=1}^N \alpha_j \, \chi_j.
\]
The real values $\alpha_i$ are amplitude factors of the diffraction pattern
that in praxis depend on experimental parameters, cf.~\cite{Tyler1976,soulez2007holography}.

To an incident laser beam of (complex) amplitude $A_0$ and wavelength $\lambda$
the amplitude $A$ in the observation plane, i.e.~at depth $z=0$,
is well modeled by a bidimensional convolution $\convbi$ w.r.t.~$(x,y)$.
In the following $\iota$ represents the imaginary unit.
Then, with $\delta_{x_j,y_j}$ denoting Dirac's peak located at
$(x_j,y_j)$ and $h_{z_j}$ the Fresnel function,
\[
  h_{z_j}(x,y) = \frac{1}{\iota \lambda z_j} \exp \Big(\iota \frac{\pi}{\lambda z_j} R^2 \Big),
  \qquad 
  \mbox{with }
  R^2:=x^2+y^2,
\]
the amplitude $A:\R^2 \to \C$ arises as
\[
  A = A_0 \Big[1 - \sum\limits_{j=1}^N \alpha_j \,
  \big(\chi_j \convbi h_{z_j} \convbi \delta_{x_j,y_j}\big)\Big].
\]
Remark that $h_{z_j} \convbi \delta_{x_j,y_j}$ denotes the shifted Fresnel function.

One difficulty occurring at digital holography inverse problems is that in praxis only the absolute value
of $A$ can be measured by the detector and the phase gets lost. The measured intensity 
consequently arises as
\begin{align*}
  G = |A|^2 = |A_0|^2 \Big[ 1 & - 2 \sum\limits_{j=1}^N \alpha_j \,
    \big(\chi_j \convbi  \text{Re}(h_{z_j}) \convbi \delta_{x_j,y_j}\big)\\
  & + \sum\limits_{i=1}^N \sum\limits_{j=1}^N
    \alpha_i \big(\chi_i \convbi  h_{z_i} \convbi \delta_{x_i,y_i}\big)
    \alpha_j \big(\chi_j \convbi  h_{-z_j} \convbi \delta_{x_j,y_j}\big)
    \Big].
\end{align*}

Since the second term is dominant over the third one for $\chi$ small,
the intensity is classically linearized~\cite{Tyler1976,soulez2007holography}:
\begin{equation}\label{eq_fresnel}
  G \approx |A_0|^2 \Big[ 1 - 2 \sum\limits_{j=1}^N \alpha_j \,
    \big(\chi_j \convbi  \text{Re}(h_{z_j}) \convbi \delta_{x_j,y_j}\big)\Big].
\end{equation}

Analogously to section~\ref{sec_massspectro} we will next derive the Neumann ERC and the Neumann
ERC in presence of noise for the operator equation~(\ref{eq_fresnel}),
$\sum \alpha_j \, \chi_j \mapsto G$.
Here for fixed $(x_j,y_j,z_j)$ the associated (not necessarily unit-normed)
atoms $\widetilde d_{z_j}\in\Ds$ have the form,
\begin{equation}\label{eq_atom_fresnel_1}
  \widetilde d_{z_j}(\cdot-x_j,\cdot-y_j) 
  := \chi_{B_r}(\cdot-x_j,\cdot-y_j) \convbi  \text{Re}(h_{z_j}) \convbi \delta_{x_j,y_j}.
\end{equation}
As before the first step is to calculate the norm of an atom and the correlation of two distinct ones.
Therefore we need some properties of the Fresnel function.
\begin{proposition}
  \label{prop_fresnel_properties}
  For the convolution of the Fresnel function we have the properties~\cite{Liebling2003}
  \begin{align*}
    h_{z_1} \convbi h_{z_2} & = h_{z_1+z_2}, && \mbox{for all $z_1,z_2 \in \R$},\\
    h_{z} \convbi h_{-z} & = \delta, && \mbox{for all $z \in \R$}.
  \intertext{
  With that and $\overline{h_z}=h_{-z}$ we get for the real part of the Fresnel function}
    \text{Re}(h_{z_1}) \convbi \text{Re}(h_{z_2}) & = \tfrac{1}{2} \big(\text{Re}(h_{z_1+z_2}) + \text{Re}(h_{z_1-z_2})\big),
      && \mbox{for all $z_1,z_2 \in \R$},\\
    \text{Re}(h_{z}) \convbi \text{Re}(h_{z}) & = \tfrac{1}{2} \big(\delta + \text{Re}(h_{2z})\big),
      && \mbox{for all $z \in \R$}.
  \end{align*}
\end{proposition}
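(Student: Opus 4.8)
The plan is to prove the two convolution identities for the complex Fresnel function $h_z$ first, and then to deduce the two real-part identities by a purely algebraic manipulation. The key observation is that $h_z$ factors as a product of one-dimensional chirps,
\[
  h_z(x,y) = g_z(x)\,g_z(y), \qquad g_z(t) := \frac{1}{\sqrt{\iota\lambda z}}\exp\Big(\iota\tfrac{\pi}{\lambda z}t^2\Big),
\]
so that the bidimensional convolution $\convbi$ decouples into a product of two one-dimensional convolutions. It therefore suffices to establish the semigroup law $g_{z_1} * g_{z_2} = g_{z_1+z_2}$ for the one-dimensional chirp and then to tensor the two coordinates.

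First I would compute $g_{z_1}*g_{z_2}$ directly. Writing out the convolution integral and expanding $(x-t)^2$, the exponent is quadratic in the integration variable $t$; completing the square separates a $t$-independent factor $\exp(\iota\tfrac{\pi}{\lambda(z_1+z_2)}x^2)$ from a pure Gaussian-type term $\exp(\iota\tfrac{\pi}{\lambda Z}(t-\text{shift})^2)$ with $Z = z_1 z_2/(z_1+z_2)$. Evaluating the remaining Fresnel integral $\int_\R \exp(\iota b s^2)\rd s = \sqrt{\iota\pi/b}$ and collecting the prefactors, the normalisation constants combine to exactly $1/\sqrt{\iota\lambda(z_1+z_2)}$, which reproduces $g_{z_1+z_2}$. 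Tensoring the coordinates gives $h_{z_1}\convbi h_{z_2} = h_{z_1+z_2}$, and the special case $z_2 = -z$ of the semigroup law gives $h_z \convbi h_{-z} = h_0 = \delta$, where $h_0 = \delta$ is read off as the identity element of the propagation semigroup.

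For the real-part identities I would use $\text{Re}(h_z) = \tfrac12(h_z + \overline{h_z}) = \tfrac12(h_z + h_{-z})$, the last equality being the stated relation $\overline{h_z} = h_{-z}$, which is immediate from conjugating the defining formula. Expanding $\tfrac14(h_{z_1}+h_{-z_1})\convbi(h_{z_2}+h_{-z_2})$ into four terms and applying the semigroup law replaces them by $h_{z_1+z_2}$, $h_{z_1-z_2}$, $h_{-(z_1-z_2)}$ and $h_{-(z_1+z_2)}$; pairing conjugate indices and using $\text{Re}(h_w)=\tfrac12(h_w + h_{-w})$ once more collapses the sum to $\tfrac12(\text{Re}(h_{z_1+z_2}) + \text{Re}(h_{z_1-z_2}))$. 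Setting $z_1 = z_2 = z$ and using that $h_0 = \delta$ is real (so $\text{Re}(\delta)=\delta$) then specialises this to $\tfrac12(\delta + \text{Re}(h_{2z}))$.

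The main obstacle is rigour rather than algebra: the chirps $h_z$ have constant modulus and are neither integrable nor square-integrable, so the convolution integrals do not converge absolutely and must be read as oscillatory integrals, or in the sense of tempered distributions. The cleanest way to legitimise every step — including the assignment $h_0 = \delta$ — is to pass to the Fourier transform, under which each chirp maps to a pure unimodular phase, $\widehat{g_z}(\xi) = \exp(-\iota\pi\lambda z\xi^2)$ (with the convention in which the prefactor $1/\sqrt{\iota\lambda z}$ cancels exactly). The semigroup law then becomes the trivial multiplicative identity $\widehat{g_{z_1}}\,\widehat{g_{z_2}} = \widehat{g_{z_1+z_2}}$, while $h_z\convbi h_{-z}=\delta$ becomes $\widehat{g_z}\,\widehat{g_{-z}} \equiv 1$, whose inverse transform is the Dirac distribution. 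I would therefore carry out the formal Gaussian computation to exhibit the constants, but justify convergence and the $z=0$ limit through this Fourier/tempered-distribution interpretation.
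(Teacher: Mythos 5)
The paper does not actually prove this proposition; it states the identities and cites Liebling et al.~\cite{Liebling2003}, so there is no in-paper argument to compare against. Your proof is correct and supplies what the paper omits. The route you take --- factoring $h_z(x,y)=g_z(x)g_z(y)$ into one-dimensional chirps, establishing the semigroup law $g_{z_1}*g_{z_2}=g_{z_1+z_2}$ by completing the square and evaluating a Fresnel integral, and then obtaining the two real-part identities purely algebraically from $\mathrm{Re}(h_z)=\tfrac12(h_z+h_{-z})$ --- is the standard derivation, and your decision to justify convergence and the degenerate case $h_0=\delta$ on the Fourier side (where $h_z$ becomes the unimodular multiplier $\exp(-\iota\pi\lambda z(\xi^2+\eta^2))$ and the semigroup law is the trivial identity $\widehat{h_{z_1}}\widehat{h_{z_2}}=\widehat{h_{z_1+z_2}}$) is exactly the right way to make the oscillatory integrals rigorous; it is also what legitimises $h_z\convbi h_{-z}=\delta$ and $\mathrm{Re}(h_0)=\delta$ in the $z_1=z_2$ specialisation. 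Two small points worth making explicit if you write this up: first, the one-dimensional prefactor $1/\sqrt{\iota\lambda z}$ carries a branch ambiguity for $z<0$, but it is harmless because the two coordinate factors multiply to the unambiguous $1/(\iota\lambda z)$, so any sign choice cancels in the tensor product; second, in collapsing the four-term expansion you implicitly use $\mathrm{Re}(h_{z_2-z_1})=\mathrm{Re}(h_{z_1-z_2})$, which follows from $\overline{h_w}=h_{-w}$ and should be stated. Neither affects correctness.
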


Another important property is that
the convolution of a function with the Fresnel function---the
so-called Fresnel transform---can be related to a direct multiplication with
its Fourier transform which is defined by:
\[
\Fourier f (\xi,\eta) = \int\limits_{\R^2} f(x,y) \, \exp(-2\pi \iota (x\xi + y\eta)) \rd x \rd y.
\]
\begin{proposition}\label{theorem_fresnel-transform}
  Let $f\in L^2(\R^2)$ and $h_z$ be a Fresnel function. Then
  \[
    \big(f \convbi  h_{z}\big) (\xi,\eta)
    = \Fourier\Big\{\iota\lambda z \, h_z \, f\Big\} 
      \Big(\frac{\xi}{\lambda z},\frac{\eta}{\lambda z}\Big) \, h_z (\xi,\eta).
  \]
\end{proposition}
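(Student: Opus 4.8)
The plan is to verify the identity by a direct computation that rearranges the quadratic phase of the Fresnel kernel, separating it into a factor depending only on the output variables $(\xi,\eta)$, a factor depending only on the integration variables $(x,y)$, and a cross term that will play the role of the Fourier kernel. First I would write out the bidimensional convolution from its definition,
\[
  (f \convbi h_z)(\xi,\eta) = \int_{\R^2} f(x,y)\, h_z(\xi-x,\eta-y) \rd x \rd y,
\]
and expand the exponent of $h_z(\xi-x,\eta-y)$ using $(\xi-x)^2+(\eta-y)^2 = (\xi^2+\eta^2) + (x^2+y^2) - 2(\xi x + \eta y)$. The pure $(\xi,\eta)$ quadratic reproduces $\exp\big(\iota\tfrac{\pi}{\lambda z}(\xi^2+\eta^2)\big)$, which together with the prefactor $\tfrac{1}{\iota\lambda z}$ of the kernel is exactly $h_z(\xi,\eta)$; since this depends only on $(\xi,\eta)$ it pulls out of the integral.

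The key step is then to identify the remaining integrand correctly. The pure $(x,y)$ quadratic $\exp\big(\iota\tfrac{\pi}{\lambda z}(x^2+y^2)\big)$ carries no prefactor and hence equals $\iota\lambda z\, h_z(x,y)$, so multiplying it against $f(x,y)$ produces precisely the amplitude $\iota\lambda z\, h_z f$ that appears in the statement. The cross term $\exp\big(-\iota\tfrac{2\pi}{\lambda z}(\xi x + \eta y)\big)$ is exactly the Fourier kernel $\exp\big(-2\pi\iota(x\xi' + y\eta')\big)$ evaluated at the rescaled frequency $(\xi',\eta') = \big(\tfrac{\xi}{\lambda z},\tfrac{\eta}{\lambda z}\big)$. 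Collecting the pieces, the leftover integral is by definition $\Fourier\{\iota\lambda z\, h_z f\}\big(\tfrac{\xi}{\lambda z},\tfrac{\eta}{\lambda z}\big)$, and multiplying by the extracted factor $h_z(\xi,\eta)$ yields the claimed formula.

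I expect the only genuine obstacle to be careful bookkeeping rather than anything conceptual: one must track the three occurrences of the normalization $\tfrac{1}{\iota\lambda z}$ and, above all, the dilation of the frequency by $\lambda z$ that converts the $(x,y)$-integral into a Fourier transform evaluated at a scaled argument, where a sign or scaling slip would corrupt the result. A secondary technical point worth flagging is that $h_z$ has constant modulus $\tfrac{1}{\lambda|z|}$ and therefore lies in neither $L^1$ nor $L^2(\R^2)$, so the convolution and the Fourier transform must be read as oscillatory integrals. I would justify the rearrangement by first establishing the identity for $f$ in the Schwartz class, where all integrals converge absolutely, and then extending to arbitrary $f\in L^2(\R^2)$ by density together with the $L^2$-continuity of the Fresnel transform.
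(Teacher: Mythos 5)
Your argument is correct and is essentially identical to the paper's own proof: both expand the quadratic phase $(\xi-x)^2+(\eta-y)^2$ into the pure $(\xi,\eta)$ part (giving $h_z(\xi,\eta)$ together with the prefactor $\tfrac{1}{\iota\lambda z}$), the pure $(x,y)$ part (giving $\iota\lambda z\,h_z f$ inside the integral), and the cross term (giving the Fourier kernel at the dilated frequency $(\tfrac{\xi}{\lambda z},\tfrac{\eta}{\lambda z})$). Your closing remark on justifying the oscillatory integrals via the Schwartz class and $L^2$-density is a point the paper glosses over, but it does not change the approach.
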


\begin{proof}
  Let $f\in L^2(\R^2)$, $z\in\R$ and $h_z$ the corresponding Fresnel function. 
  Then rearranging yields to the statement,
  \begin{align*}
    \big( & f \convbi  h_{z}\big) (\xi,\eta)
      = \int\limits_{\R^2} f(x,y) \,
        \tfrac{1}{\iota \lambda z} \exp\big(\tfrac{\iota\pi}{\lambda z} \big((x-\xi)^2+(y-\eta)^2\big)\big) \rd x \rd y\\
    & = \tfrac{1}{\iota \lambda z} \exp\big(\tfrac{\iota\pi}{\lambda z} \big(\xi^2+\eta^2\big)\big)
        \int\limits_{\R^2} f(x,y) \exp\big(\tfrac{\iota\pi}{\lambda z} \big(x^2+y^2\big)\big)
        \exp\big(-2\pi \iota \big(\tfrac{x\xi}{\lambda z}+\tfrac{y\eta}{\lambda z}\big)\big) \rd x \rd y\\
    & = \Fourier\big\{\iota\lambda z \, h_z \, f\big\} 
        \big(\tfrac{\xi}{\lambda z},\tfrac{\eta}{\lambda z}\big) \, h_z (\xi,\eta).
  \end{align*}
\end{proof}

\begin{remark}\label{remark_far-field}
  In praxis $f$ has a bounded and small support w.r.t.~$\sqrt{\lambda z}$.
  With $(x^2+y^2)_{\max}$
  denoting the maximal spatial dimension of $f$
  resp. the maximal spatial extend of the corresponding particle
  the so-called far-field condition
  $\frac{(x^2+y^2)_{\max}}{\lambda z} \ll 1$ holds in the proof of
  proposition~\ref{theorem_fresnel-transform},
  cf.~\cite{Tyler1976}.
  In~\cite{soulez2007holography} e.g., particles of radius at about 50{\textmu}m
  are illuminated with a red laser beam (wavelength  630nm)
  and distance to camera of about 250mm.
  Thus the term $(x^2+y^2)_{\max} /(\lambda z) \approx 3\cdot10^{-4}$ and
  hence $\exp\big(\frac{\iota\pi(x^2+y^2)}{\lambda z}\big)$ is approximatively $1$.
  Under the far-field condition
  we can estimate
  \begin{equation}\label{eq_far_field_cond}
    \big(f \convbi  h_{z}\big) (\xi,\eta)
    \approx \Fourier f
      \Big(\frac{\xi}{\lambda z},\frac{\eta}{\lambda z}\Big) \, h_z (\xi,\eta).
  \end{equation}
  With that for the complex valued diffraction,
  with $\rho^2 := \xi^2+\eta^2$ and $J_\nu$ denoting the first kind Bessel function of order $\nu$ 
  we get
  \[
    \big(\chi_{B_r} \convbi  h_{z}\big) (\rho)
    \approx \frac{r}{\iota \rho} \;
    J_1\Big(\frac{2\pi r}{\lambda z} \rho \Big) \; 
    \exp \Big(\iota \frac{\pi}{\lambda z} \rho^2 \Big),
  \]
  since $\Fourier\chi_{B_r}(\rho)= 2 \pi r^2 \Big[\frac{J_1(2 \pi r \rho)}{2 \pi r \rho}\Big]$ 
  holds (Airy's pattern, vide infra).
  With that for a real valued intensity atom we get
  \[
    \chi_{B_r} \convbi  \text{Re}(h_{z})
    = \text{Re} (\chi_{B_r} \convbi  h_{z})
    \approx \frac{r}{\rho}
    J_1\Big(\frac{2\pi r}{\lambda z}\rho\Big)
    \sin \Big(\frac{\pi}{\lambda z} \rho^2\Big),
  \]
  which corresponds to the model given by Tyler and Thompson in~\cite{Tyler1976}.
\end{remark}
Back to the correlation and---as a special case---the norm of an atom: 
The correlation appears as the autoconvolution, namely
\begin{align*}
    \Ip{\widetilde d_{z_i}(\cdot-x_i,\cdot-y_i)&}{\widetilde d_{z_j}(\cdot-x_j,\cdot-y_j)}\\
    & = \int\limits_{\R^2} \widetilde d_{z_i}(x,y) \; \widetilde d_{z_j}(x-(x_j-x_i),y-(y_j-y_i)) \rd x \rd y \\
    & = \big(\widetilde d_{z_i} \convbi \widetilde d_{z_j}\big)(x_j-x_i,y_j-y_i).
\end{align*}
In the following we assume that all particles are located in a plane parallel to the detector,
i.e. $z:=z_i$ is constant for all $i$.
Then the autoconvolution of an atom appears as
\begin{equation*}
  \widetilde d_{z} \convbi \widetilde d_{z}
    = \chi_{B_r} \convbi \chi_{B_r} \convbi \text{Re}(h_{z}) \convbi  \text{Re}(h_{z}).
\end{equation*}
With proposition~\ref{prop_fresnel_properties} and the formula
\[
C(\rho) = \big(\chi_{B_r} \convbi \chi_{B_r}\big) (\rho) =
\begin{cases}
  2r^2 \cos^{-1} \Big(\frac{\rho}{2r}\Big) - \frac{\rho}{2} \sqrt{4r^2 -\rho^2}
  &\mbox{for }4r^2  > \rho^2,\\
  0, &\mbox{else}.
\end{cases}
\]
we get
\begin{equation}\label{eqn_fresnel_autocorr}
  \widetilde d_{z} \convbi \widetilde d_{z}
    = C \convbi \tfrac{1}{2} \Big[\delta \; + \; \text{Re}(h_{2z})\Big]
    = \tfrac{1}{2} \Big[C \; + \; C \convbi \text{Re}(h_{2z})\Big].
\end{equation}
With remark~\ref{remark_far-field} and since $\Fourier C$ is real valued
we get
\begin{align*}
  C \convbi \text{Re}(h_{2z})
  & = \text{Re} (C \convbi h_{2z})
  \approx \text{Re} \Big(
      \Fourier C 
      (\cdot/\lambda z) \; h_{2z}
    \Big)
  = \Fourier C (\cdot/\lambda z)
    \; \text{Re}(h_{2z})\\
  & = \Fourier\chi_{B_r}(\cdot/\lambda z) \; \Fourier\chi_{B_r}(\cdot/\lambda z)
    \; \text{Re}(h_{2z}).
\end{align*}
In physics it is well known that the Fourier transform of a disc is the Bessel cardinal function,
$\jinc(x) := J_1(x) / x$,
since it is the diffraction of a circular aperture at infinite distance.
Nevertheless, for the sake of completeness and mathematical beauty
we will illustrate this computation:
Since the Fourier transform of a radial function is the 
Hankel transform of order zero (also known as Bessel transform of order zero), cf.~\cite[Theorem~IV.3.3, page~155]{Stein1971},
the Fourier transform of $\chi_{B_r}$ appears, for $\rho^2:=\xi^2+\eta^2$, as
\[
  \Fourier\chi_{B_r} (\rho)
  = 2\pi \int\limits_0^{r} S \, J_0(2\pi \rho S) \rd S
  = \frac{1}{2 \pi \rho^2} \int\limits_0^{2\pi r \rho} S \, J_0(S) \rd S.
\]
In order to solve this definite integral we use $\int S J_0(S) \rd S = S J_1(S)$,
cf.~\cite[equation 5.52\;1.]{Jeffrey2007}, and get
\[
  \Fourier\chi_{B_r} (\rho)
  = 2 \pi r^2 \Big[\frac{J_1(2 \pi r \rho)}{2 \pi r \rho}\Big],
\]
hence the Fourier transform of the circle-circle intersection $C$ appears as
\[
  \Fourier C (\rho)
  = \Fourier\chi_{B_r} (\rho) \, \Fourier\chi_{B_r} (\rho)
  = \tfrac{r^2}{\rho^2} J_1^2(2\pi r \rho).
\]
With that result we can easily calculate the norm of an atom $\widetilde d_z$:
Since $C(0) = \pi r^2$,
$\Fourier C (0) = \big(\Fourier\chi_{B_r} (0)\big)^2 = \big(\int \chi_{B_r} \rd x \big)^2= \pi^2 r^4$
and $h_{2z}(0)=0$ we obtain
\[
  \norm{\widetilde d_z}^2
  = \big|\widetilde d_{z} \convbi \widetilde d_{z}\big| (0)
  \approx \tfrac{1}{2} \pi r^2.
\]
Hence for fixed $z$ we can represent the associated unit-normed
atoms $d_z\in\Ds$, with $R^2:=x^2+y^2$, via
\begin{equation}\label{eq_atom_fresnel_2}
  d_{z}
  := \frac{\widetilde d_{z}}{\norm{\widetilde d_{z}}}
  \approx \Big(\frac{2}{\pi}\Big)^{\frac{1}{2}} \frac{1}{R}
    J_1\Big(\frac{2\pi r}{\lambda z}R\Big)
    \sin \Big(\frac{\pi}{\lambda z} R^2\Big).
\end{equation}
In figure~\ref{bild:fresnel_atom} the centered atom 
for a particle of 50{\textmu}m radius
is displayed which is
illuminated with a red laser beam (wavelength  630nm) in a
distance of 250mm to the camera.

The autoconvolution for general $\rho$
and hence the correlation of two atoms
$d_{z}(\cdot-x_i,\cdot-y_i)$ and $d_{z}(\cdot-x_j,\cdot-y_j)$ 
with distance distance $\rho=((x_j-x_i)^2+(y_j-y_i)^2)^{\frac{1}{2}}$
in digital holography emerges as
\begin{align}
  \Big|\Ip{d_{z}(\cdot-x_i,\cdot-y_i)&}{d_{z}(\cdot-x_j,\cdot-y_j)}\Big|
    = \big|d_{z} \convbi d_{z}\big| (\rho)
    = \frac{1}{\norm{\widetilde d_z}^2} \, \big|\widetilde d_{z} \convbi \widetilde d_{z}\big| (\rho) \notag\\
  & \approx \frac{1}{\pi r^2} \, 
    \Big[
      C(\rho) +
      \Fourier C \Big(\frac{\rho}{\lambda z}\Big) \; \big|\text{Re}(h_{2z}(\rho))\big|
    \Big] \notag\\
  & = \frac{C(\rho)}{\pi r^2} 
    + \frac{1}{4} \,
    J_1^2\Big(\frac{2\pi r}{\lambda z} \rho\Big) \;
    \Big|\sinc \Big(\frac{1}{2\lambda z} \rho^2\Big)\Big|,\label{eq_cor_fresnel}
\end{align}
where $\sinc$ denotes the normalized sine cardinal and is defined via
$\sinc(x) := \sin(\pi x) / \pi x$.

The correlation in digital holography~(\ref{eq_cor_fresnel}) is
not as easily valuable as in mass spectrometry, because
it is not monotonically decreasing in the distance $\rho$
due to the oscillating Bessel and sine functions.
To come to an estimate from above which is monotonically decreasing we
use bounds for the absolute value of the Bessel functions $J_1^2$.
In~\cite{Landau2000} Landau gives estimates for $|J_\nu(x)|$ for $x>0$ and $\nu > 0$, namely
\begin{equation}\label{eq_landau}
  |J_\nu(x)| \leq \min\{b_L \nu^{-1/3}, c_L x^{-1/3}\},
\end{equation}
with constants
\begin{align*}
  b_L & := \sqrt[3]{2} \sup\limits_{x>0} \frac{\sqrt{x}}{3} 
        \Big(J_{-\frac{1}{3}} \Big(\frac{2}{3}x^{\frac{3}{2}}\Big) + J_{\frac{1}{3}} \Big(\frac{2}{3}x^{\frac{3}{2}}\Big)\Big)
        \approx 0.6748,\\
  c_L & := \sup\limits_{x>0} x^{\frac{1}{3}} J_0(x) \approx 0.7857.
\end{align*}
In addition sine cardinal obviously is bounded from above via 1 and $1/x$ and hence we have
\begin{equation}
  \label{eq_estimate_correlation_holography}
  \big|d_{z} \convbi d_{z}\big| (\rho)
  \leq \frac{C(\rho)}{\pi r^2}
  + \frac{1}{4}
  \min \Big\{b_L^2,
  c_L^2 \Big(\frac{\lambda z}{2\pi r}\Big)^{\frac{2}{3}} \rho^{-\frac{2}{3}} \Big\}
  \min \Big\{1,
  \frac{2\lambda z}{\pi} \rho^{-2} \Big\},
\end{equation}
which now is monotonically decreasing in $\rho$.
Figure~\ref{bild:fresnel_cor} illustrates the oscillating part of the correlation~(\ref{eq_cor_fresnel})
and its corresponding upper bound
for two particles of 50{\textmu}m radius
which are illuminated with a red laser beam (wavelength  630nm) in a
distance of 250mm to the camera.

\begin{figure}[ht]
  \begin{minipage}[b]{0.5\linewidth}
    \hspace{-.5cm}
    \begin{tikzpicture}[xscale=0.006,yscale=3]
      \begin{scope}[>=latex]
        \draw[->] (-510,-0.8) -- (510,-0.8) node[above] {$R$(\textmu m)};
        \foreach \y in {-0.4,0,0.4,0.8}
          \draw[help lines] (-510,\y) -- (510,\y);
        \draw[->] (-500,-0.825) -- (-500,0.9) node[above] {$d_z(R)$};
      \foreach \x in {-250,0,250}
          \draw[help lines] (\x,-0.8) -- (\x,0.85);
      \end{scope}
      \foreach \x in {-250,0,250}
        \draw (\x,-0.775) -- (\x,-0.825) node[below] {\footnotesize {\FPeval\result{round(({\x}*10):0)}$\FPprint\result$}};
      \foreach \y in {-0.8,-0.4,0.4,0.8}
        \draw (-480,\y) -- (-520,\y) node[left] {\footnotesize {\FPeval\result{round(({\y}*10):0)}$\FPprint\result\cdot10^{-4}$}};
      \draw (-480,0) -- (-520,0) node[left] {\footnotesize {0}};
      \draw plot file {./fresnel_atom.dat};
    \end{tikzpicture}
    \caption{Unit-normed, centered atom of particles of radius 50{\textmu}m,
      illuminated with a red laser beam (wavelength  630nm) and distance to camera of 250mm.}
    \label{bild:fresnel_atom}
  \end{minipage}
  \begin{minipage}[b]{0.5\linewidth}
    \hspace{0.5cm}
    \begin{tikzpicture}[xscale=0.03,yscale=40]
      \begin{scope}[>=latex]
        \draw[->] (-5,0) -- (160,0) node[above] {$\rho$(\textmu m)};
        \draw[->] (0,-0.005) -- (0,0.12) node[above right] {correlation};
      \end{scope}
      \draw (0,0.005) -- (0,-0.005) node[below] {\footnotesize $0$};
      \foreach \x in {50,100,150}
        \draw (\x,0.003) -- (\x,-0.003) node[below] {\footnotesize {\FPeval\result{round(({\x}*10):0)}$\FPprint\result$}};
      \foreach \y in {0.1,0.05}
        \draw (4,\y) -- (-4,\y) node[left] {\footnotesize {${\y}$}};
      \draw plot file {./fresnel_cor.dat};
      \draw plot file {./fresnel_abschaetz_cor.dat};
    \end{tikzpicture}
    \caption{The oscillating part of the correlation of two atoms
      with distance $\rho$
      and its corresponding monotonically decreasing estimate
      (same settings as in figure~\ref{bild:fresnel_atom}).}
    \label{bild:fresnel_cor}
  \end{minipage}
\end{figure}

\begin{remark}
  In~\cite{Krasikov2006} Krasikov gives more precise estimation for $J_\nu^2(x)$, namely,
  for $\nu>-1/2$, $\varsigma:=(2\nu+1)(2\nu+3)$ and $x>\sqrt{\varsigma + \varsigma^{2/3}}/2$,
  \[
    J_\nu^2(x) \leq \frac{4(4x^2-(2\nu+1)(2\nu+5))}{\pi((4x^2-\varsigma)^{3/2} - \varsigma)}.
  \]
  With that (asymptotically $|d_{z} \convbi d_{z}|(\rho) \sim \rho^{-3}$) instead of Landau's rough bound~(\ref{eq_landau}) 
  (asymptotically $|d_{z} \convbi d_{z}|(\rho) \sim \rho^{-\frac{8}{3}}$) one can
  get a more precise recovery condition for digital holography.
  Since this technical computation is beyond the scope of this theoretical paper
  we postpone it here.
\end{remark}

With this estimation we will come to a resolution bound for droplets jet reconstruction,
as e.g.~used in~\cite{soulez2007holography}.
Here monodisperse droplets, i.e. they have the same size, shape and mass,
were generated and emitted on a strait line parallel to the detector plane.
This configuration eases the computation of the Neumann ERC and the Neumann
ERC in presence of noise.
Analogously to mass spectrometry we define that the particles appear
at some selected points $i\in\Delta\Z:=\set{i\in\Z}{\tfrac{i}{\Delta}\in\Z}$,
where the parameter $\Delta$ describe the dictionary refinement.
If we additionally assume that the particles have the minimal distance
$\rho\in\Delta\N$, then the sum of inner products of support atoms $\Ds(I)$ 
and non-support atoms $\Ds(I^\complement)$
can be estimated from above. For $\rho>\Delta$ we get
\begin{align*}
  \sup\limits_{i\in I} \sum\limits_{\sumstack{j\in I}{j \neq i}} |\ip{d_i}{d_j}|
  & \leq \sum\limits_{j=1}^{\lfloor N/2 \rfloor}
    \tfrac{2}{\pi r^2} C(j\rho)
    + \tfrac{1}{2}
    \min \Big\{
      b_L^2,
      c_L^2 (\tfrac{\lambda z}{2\pi r})^{\frac{2}{3}} \, (j\rho)^{-\frac{2}{3}}
    \Big\}
    \min \Big\{
      1,
      \tfrac{2\lambda z}{\pi} (j\rho)^{-2}
    \Big\}
\intertext{and}
  \sup\limits_{i\in I^\complement} \sum\limits_{j\in I} |\ip{d_i}{d_j}|
  & \leq \sup\limits_{\sumstack{i\in\Delta\Z}{\Delta\leq i \leq \rho-\Delta}} \,
    \sum\limits_{j=-\lfloor N/2 \rfloor}^{\lfloor N/2 \rfloor}
    \tfrac{1}{\pi r^2} C(|j \rho-i|) \\
  & + \tfrac{1}{4}
    \min \Big\{
      b_L^2,
      c_L^2 (\tfrac{\lambda z}{2\pi r})^{\frac{2}{3}} \, |j \rho-i|^{-\frac{2}{3}}
    \Big\}
    \min \Big\{
      1,
      \tfrac{2\lambda z}{\pi} |j \rho-i|^{-2}
    \Big\}.
\end{align*}

\begin{proposition}\label{theorem_fresnel}
  An estimation from above for the ERC (i.e. $\nsr=0$) and
  $\varepsilon$ERC (i.e. $0<\nsr<\frac{1}{2}$)
  for characteristic functions convolved with the real part of the Fresnel kernel is
  for $\rho>\Delta$
  \begin{align*}
    & \sum\limits_{j=1}^{\lfloor N/2 \rfloor}
      \tfrac{2}{\pi r^2} C(j\rho)
      + \tfrac{1}{2}
      \min \Big\{
        b_L^2,
        c_L^2 (\tfrac{\lambda z}{2\pi r})^{\frac{2}{3}} \, (j\rho)^{-\frac{2}{3}}
      \Big\}
      \min \Big\{
        1,
        \tfrac{2\lambda z}{\pi} (j\rho)^{-2}
      \Big\}\\
  & + \sup\limits_{1\leq i < \tfrac{\rho}{\Delta}} \Bigg\{ \,
    \sum\limits_{j=-\lfloor N/2 \rfloor}^{\lfloor N/2 \rfloor}
    \tfrac{1}{\pi r^2} C(|j \rho-i\Delta|)\\
  & \qquad
    + \tfrac{1}{4}
    \min \Big\{
      b_L^2,
      c_L^2 (\tfrac{\lambda z}{2\pi r})^{\frac{2}{3}} \, |j \rho-i\Delta|^{-\frac{2}{3}}
    \Big\}
    \min \Big\{
      1,
      \tfrac{2\lambda z}{\pi} |j \rho-i\Delta|^{-2}
    \Big\}\Bigg\}
    < 1 - 2\,\nsr.
  \end{align*}
\end{proposition}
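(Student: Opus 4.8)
The plan is to feed the two upper bounds that were just derived for the support--support and support--non-support correlation sums directly into the Neumann $\varepsilon$ERC of proposition~\ref{theorem_sufnoise}. By that proposition it suffices to guarantee
\[
  \sup\limits_{i\in I} \sum\limits_{\sumstack{j\in I}{j\neq i}} |\ip{d_i}{d_j}|
  + \sup\limits_{i\in I^\complement} \sum\limits_{j\in I} |\ip{d_i}{d_j}|
  < 1 - 2\,\nsr,
\]
so the whole argument reduces to producing computable monotone majorants for the two suprema under the geometric assumptions of the droplet-jet configuration, namely equidistant support on the refined grid $\Delta\Z$ with minimal distance $\rho\in\Delta\N$ and $\rho>\Delta$. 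The exact correlation of two atoms is given by~(\ref{eq_cor_fresnel}), which is not monotone in the inter-atom distance; hence the first move is to replace every occurrence of $|d_z\convbi d_z|(\cdot)$ by its monotonically decreasing majorant~(\ref{eq_estimate_correlation_holography}). This dominance is legitimate pointwise in the distance, so it preserves both suprema and produces the Bessel- and $\sinc$-type bounds that appear in the statement.

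For the support--support term I would exploit translation invariance: the worst inner sum is attained at a central support atom, whose neighbours sit at distances $\rho,2\rho,\dots$, at most $\lfloor N/2\rfloor$ on each side, and the two-sided symmetry of~(\ref{eq_estimate_correlation_holography}) yields the leading factor $2$ together with the first displayed sum. For the support--non-support term a non-support atom lies at some point of $\Delta\Z$ whose residue modulo $\rho$ can be written as $i\Delta$ with $1\le i<\rho/\Delta$; its distances to the support atoms are then $|j\rho-i\Delta|$ for $j=-\lfloor N/2\rfloor,\dots,\lfloor N/2\rfloor$, and taking the supremum over the finite admissible offset set reproduces the second block. Adding the two majorants and demanding that their sum stay strictly below $1-2\,\nsr$ gives exactly the asserted inequality, whereupon proposition~\ref{theorem_sufnoise} delivers exact recovery of the support $I$.

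The main obstacle is the bookkeeping in the cross term: reducing the supremum over all non-support atoms to a supremum over the finite offset set $\{1,\dots,\lceil\rho/\Delta\rceil-1\}$ through the periodic structure of the grid, and counting the support atoms inside each summation window so that the counts $\lfloor N/2\rfloor$ and the symmetry factors come out correctly. Once this geometric accounting is pinned down, the dominance step via~(\ref{eq_estimate_correlation_holography}) and the invocation of proposition~\ref{theorem_sufnoise} are entirely routine, and no property of the Fresnel kernel beyond the correlation formula~(\ref{eq_cor_fresnel}) and the Landau-type Bessel bound~(\ref{eq_landau}) is needed.
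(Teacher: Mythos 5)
Your proposal is correct and follows essentially the same route as the paper: the paper's ``proof'' of proposition~\ref{theorem_fresnel} is precisely the derivation immediately preceding it, namely replacing the oscillating correlation~(\ref{eq_cor_fresnel}) by the monotone majorant~(\ref{eq_estimate_correlation_holography}), bounding the support--support sum by symmetry around a central atom and the cross term by a supremum over the finite offset set $i\Delta$, $1\le i<\rho/\Delta$, and then inserting both majorants into the Neumann $\varepsilon$ERC of proposition~\ref{theorem_sufnoise}. Your geometric bookkeeping (factor $2$ on one side, two-sided sum with offsets $|j\rho-i\Delta|$ on the other) matches the paper's displayed estimates exactly.
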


\begin{remark}
  Same as before for mass spectrometry:
  If the cardinality of the support $N$ is unknown
  one could replace the finite sums by infinite sums.
  These sums exist and can be expressed
  in terms of the Hurwitz zeta function
  $\zeta(\nu,q) := \sum_{j=0}^\infty (q+j)^{-\nu}$, for $\nu>1$, $q>0$,
  and the Riemann zeta function $\zeta(\nu):=\zeta(\nu,1)=\sum_{j=1}^\infty j^{-\nu}$,
  respectively.
\end{remark}

The condition in proposition~\ref{theorem_fresnel} seems not to be easy to handle.
However, in praxis all parameters are known and one can compute a bound
via approaching from large $\rho$.
As soon as the sum is smaller than 1, it is guaranteed that OMP can recover exactly.
A typical setting for digital holography of particles is
the usage of a red laser of wavelength $\lambda=0.6328$\textmu m and
a distance of $z=$ 200mm from the camera, cf.~\cite{soulez2007holography}.
In figure~\ref{bild_epsERC_fresnel} the condition of proposition~\ref{theorem_fresnel}
is plotted for particles with typical radii $r\in\{5, 15, 25, 35, 50, 75\}$\textmu m.
In the computation the asymptotic formula is used, i.e. for an unknown support cardinality $N$.
For the dictionaries a corresponding refinement of $\Delta = r/2$
was chosen.
The colored areas describe the combinations
where the Neumann ERC is fulfilled and hence OMP recovers exactly.

\begin{figure}[ht]
  \centering
  \begin{tikzpicture}[xscale=1.2,yscale=60]
    \filldraw[fill=black!20!white] plot  file {./ERC_fresnel6.dat} |- (4,0);
    \filldraw[fill=black!60!white] plot  file {./ERC_fresnel5.dat} |- (4,0);
    \filldraw[fill=black!20!white] plot  file {./ERC_fresnel4.dat} |- (4,0);
    \filldraw[fill=black!60!white] plot  file {./ERC_fresnel3.dat} |- (4,0);
    \filldraw[fill=black!20!white] plot  file {./ERC_fresnel2.dat} |- (4,0);
    \filldraw[fill=black!60!white] plot  file {./ERC_fresnel1.dat} |- (4,0);
    \foreach \x in {4,6,8,10,12,14}
      \draw (\x,0.002) -- (\x,-0.002) node[below] {\footnotesize {\FPeval\result{round(({\x}*100):0)}$\FPprint\result$}};
    \foreach \y in {0,0.02,0.04,0.06,0.08,0.1}
      \draw (4.1,\y) -- (3.9,\y) node[left] {\footnotesize ${\y}$};
    \begin{scope}[>=latex]
      \draw[->] (4,0) -- (14.5,0) node[right] {$\rho$ (in \textmu m)};
      \draw[->] (4,0) -- (4,0.11) node[above] {$\nsr$};
    \end{scope}
    \draw[->] (5,0.08)    node[above] {$r=75$} -- (6,0.045);
    \draw[->] (6,0.0825)  node[above] {$r=50$} -- (6.5,0.046);
    \draw[->] (7,0.085)   node[above] {$r=35$} -- (7,0.052);
    \draw[->] (8,0.0875)  node[above] {$r=25$} -- (7.5,0.056);
    \draw[->] (9,0.09)    node[above] {$r=15$} -- (8,0.059);
    \draw[->] (10,0.0925) node[above] {$r=5$} -- (8.5,0.055);
  \end{tikzpicture}
  \caption{$\varepsilon$ERC for combinations of $\rho$ and $\nsr$ with
           corresponding dictionary refinement of $\Delta = r/2$.
           For particles the radii $r\in\{5, 15, 25, 35, 50, 75\}$\textmu m
           and the
           asymptotic formula~\eqref{eq_estimate_correlation_holography}
           for an unknown support cardinality $N$ are used.}
  \label{bild_epsERC_fresnel}
\end{figure}
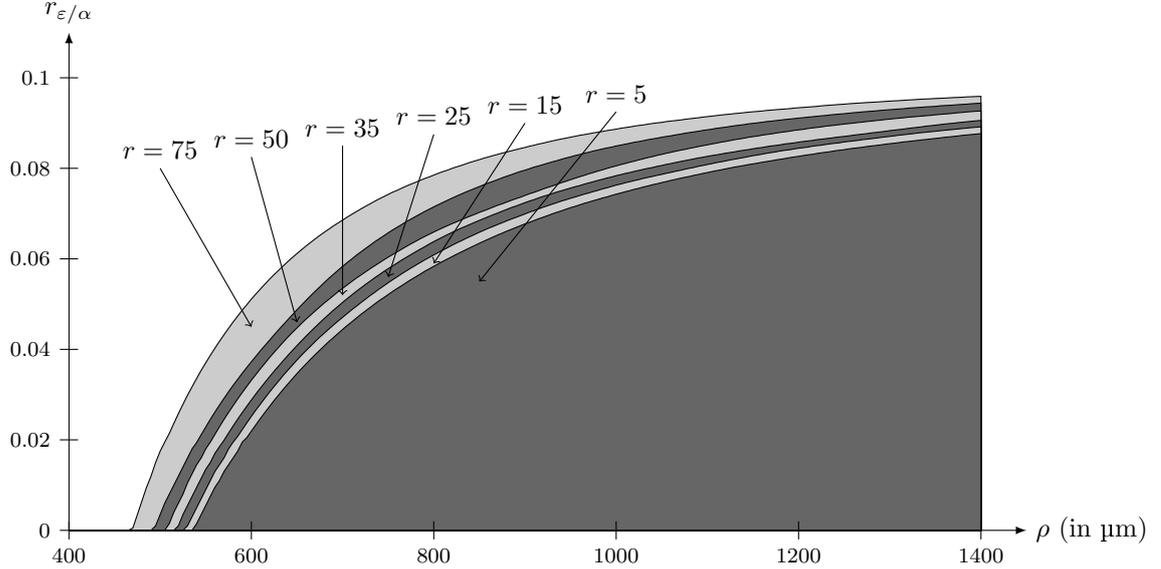

\subsection*{Numerical Examples}
We apply the Neumann $\varepsilon$ERC~(\ref{eq_sufnoise}) to simulated data of
droplets jets. For the simulation we use the same setting as above, i.e. 
a red laser of wavelength $\lambda=0.6328$\textmu m and
a distance of $z=$ 200mm from the camera.
The particles have a diameter of $100$ microns and the corresponding
dictionary the refinement of $25$\textmu m. Those parameters correspond to that of the 
experimental setup used in~\cite{soulez2007holography,soulez2007holography2}

After applying the digital holography model~(\ref{eq_fresnel})
we add Gaussian noise of different noise levels and in each case of zero mean.
For the coefficients we choose $2\alpha_i = 10$ for all $i\in\supp\alpha$.
The figure~\ref{bild_holo} shows three simulated holograms with
different distances $\rho$ and noise-to-signal levels $\nsr$.
For all three noisy examples in the right column all the particles
were recovered exactly. However, only for the image on top($\rho\approx721$\textmu m) the
condition of proposition~\ref{theorem_fresnel} holds.
In the second image in the middle of the figure
the particles have
a too small distance to each other($\rho\approx360$\textmu m) and
even for the noiseless case the condition is not fulfilled.
The last image ($\rho\approx721$\textmu m) was manipulated with unrealistically huge noise
so that here the condition of proposition~\ref{theorem_fresnel}
is violated, too, cf. figure~\ref{bild_epsERC_fresnel}.

\begin{figure}[p]
  \begin{tabular}{ccc}
    \includegraphics[width=0.47\linewidth]{./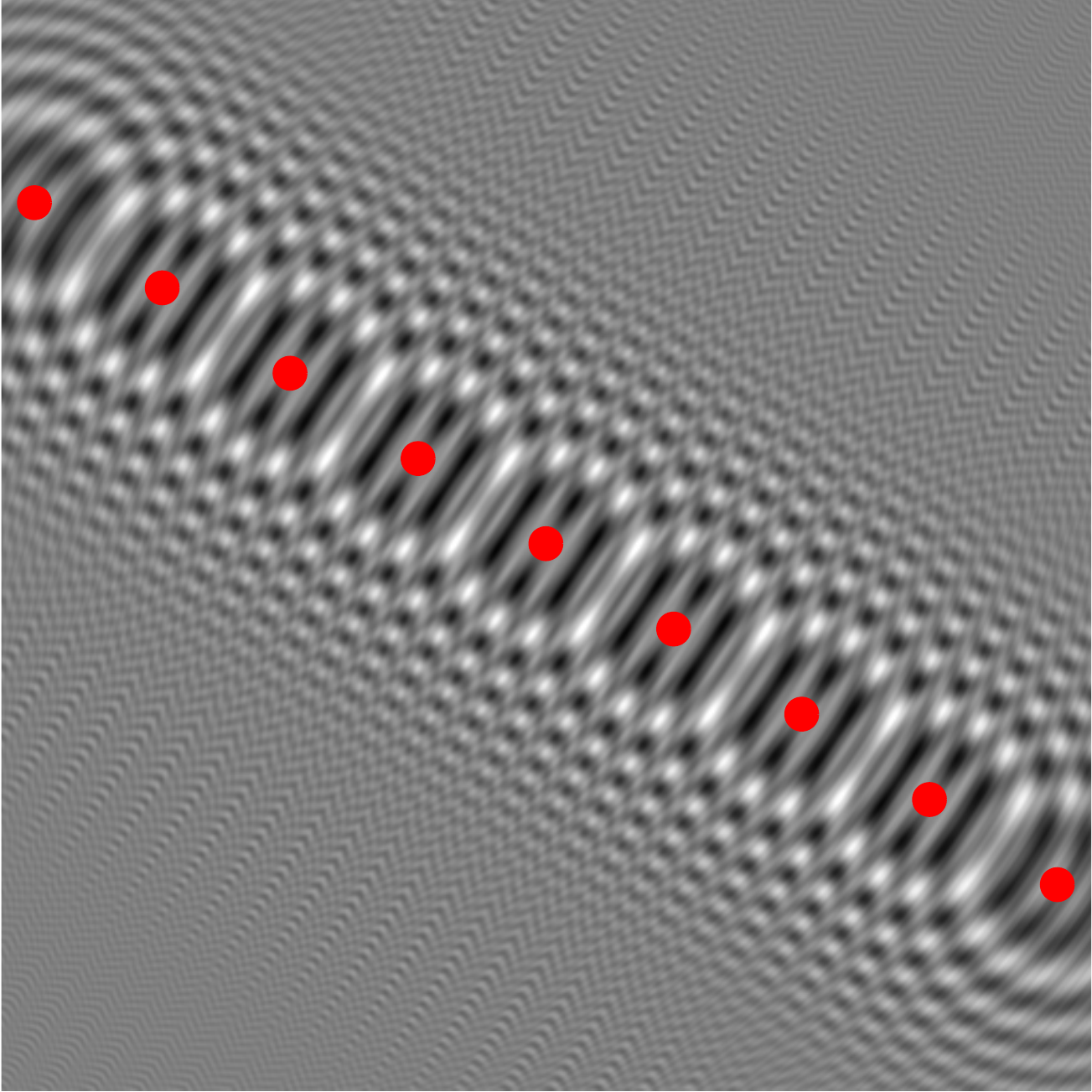}
    & \includegraphics[width=0.47\linewidth]{./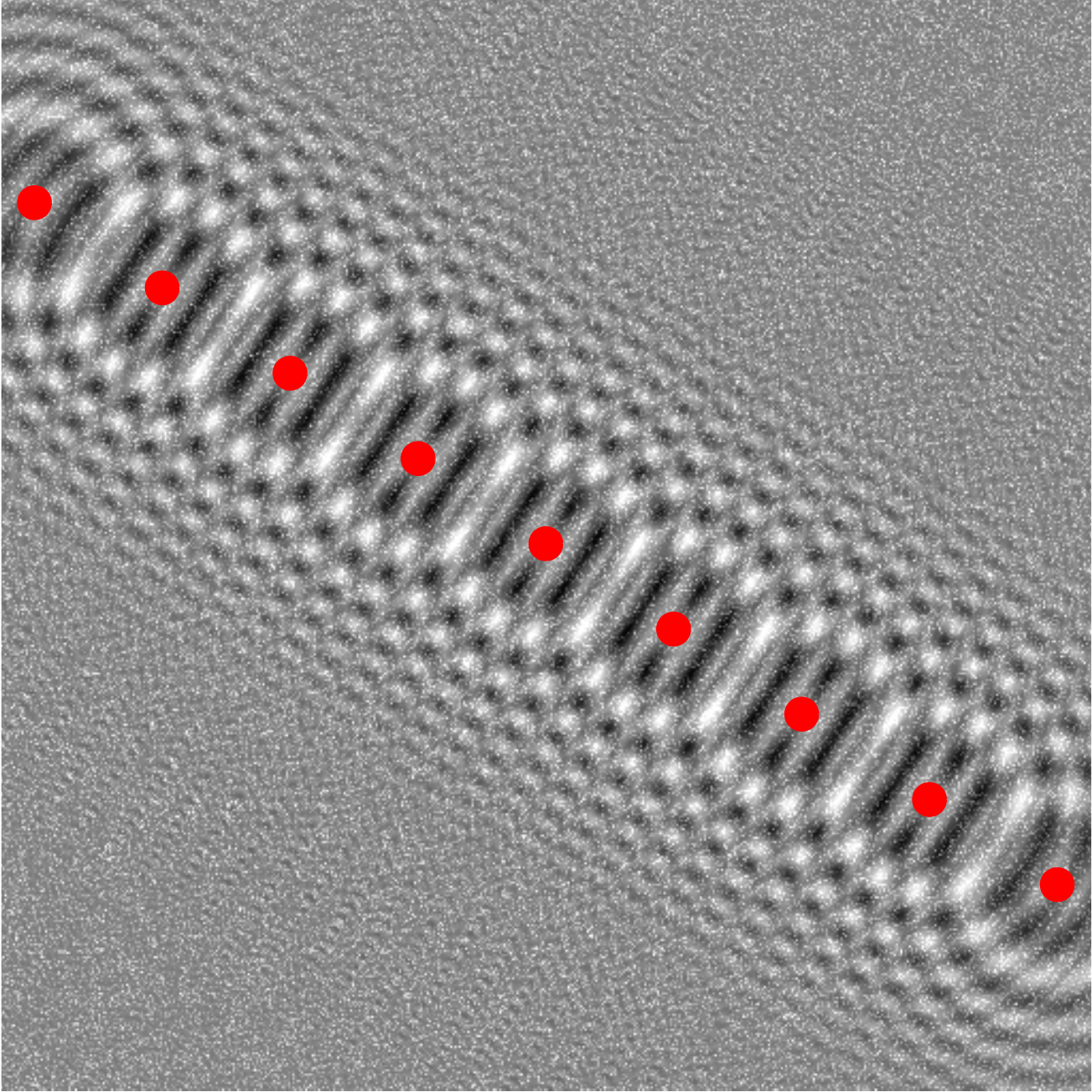}
    &
    \begin{tikzpicture}
      \draw[top color=white,bottom color=black,use as bounding box] (0,0) rectangle (0.5,6.1);
          \foreach \x in {0.0,1.525,3.05,4.575,6.1} 
      \draw (0,\x) -- (0.7,\x) node[right] {\footnotesize {\FPeval\result{round((6/61*{\x} + 0.7):2)}$\FPprint\result$}};
    \end{tikzpicture}\\
    \includegraphics[width=0.47\linewidth]{./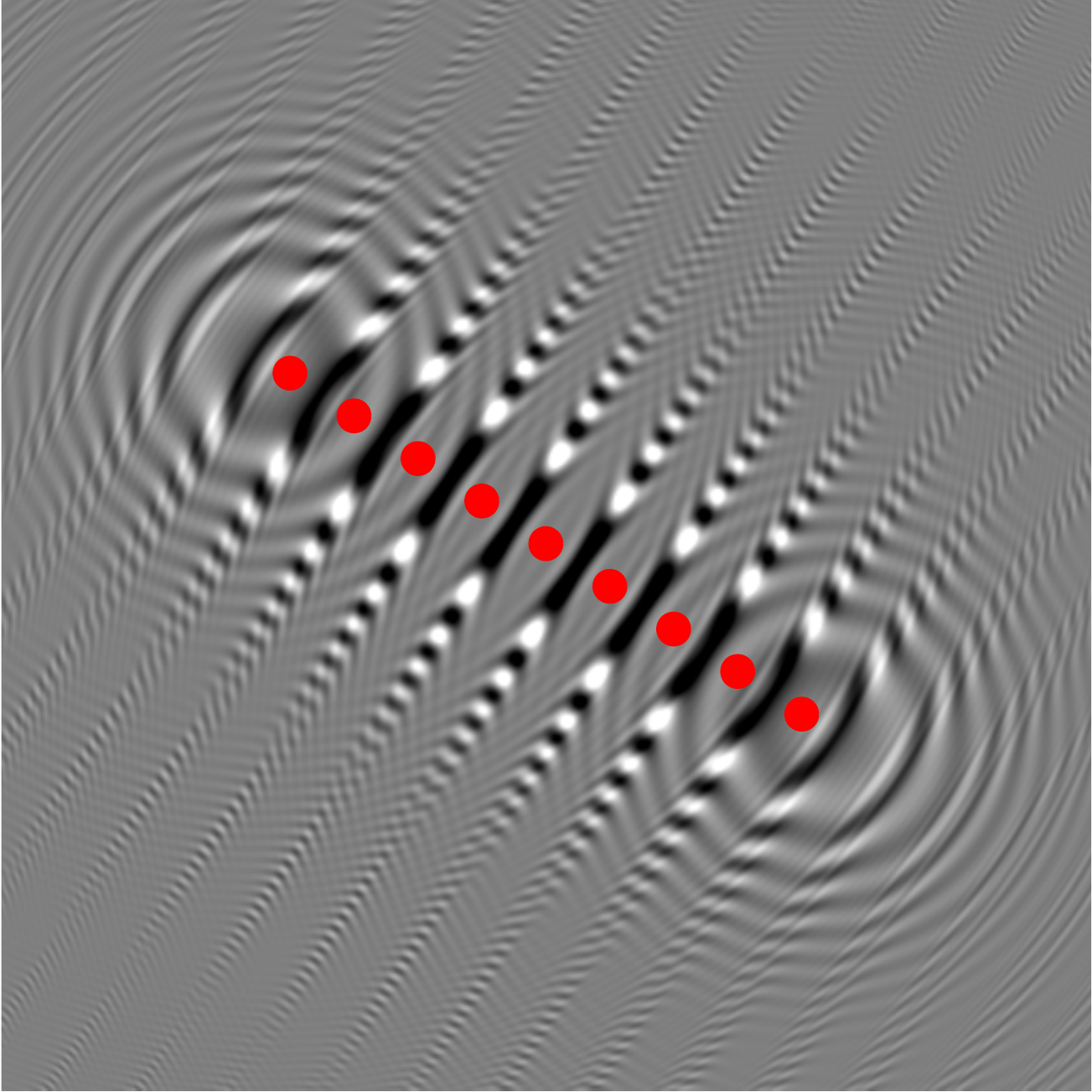}
    & \includegraphics[width=0.47\linewidth]{./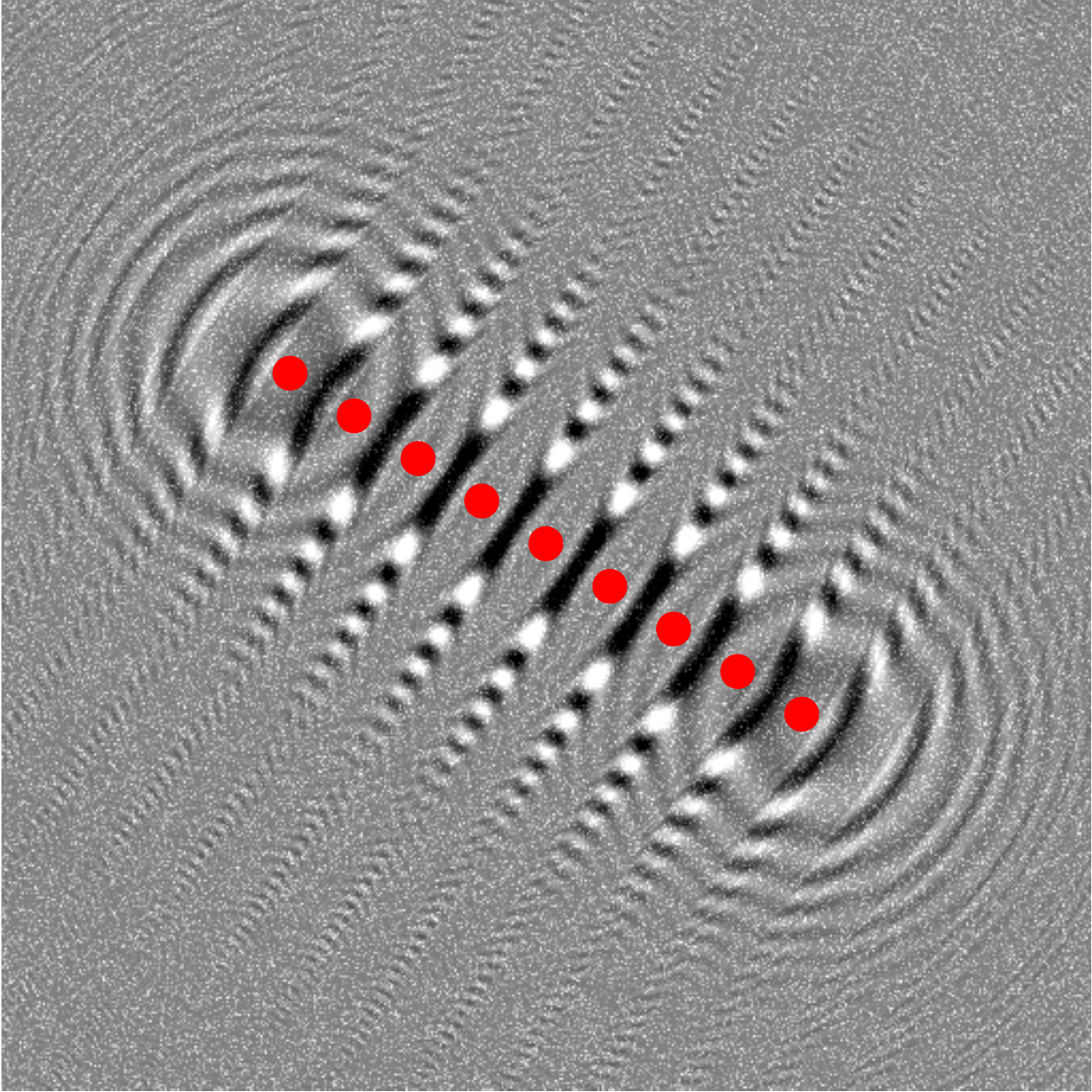}\\
    & \includegraphics[width=0.47\linewidth]{./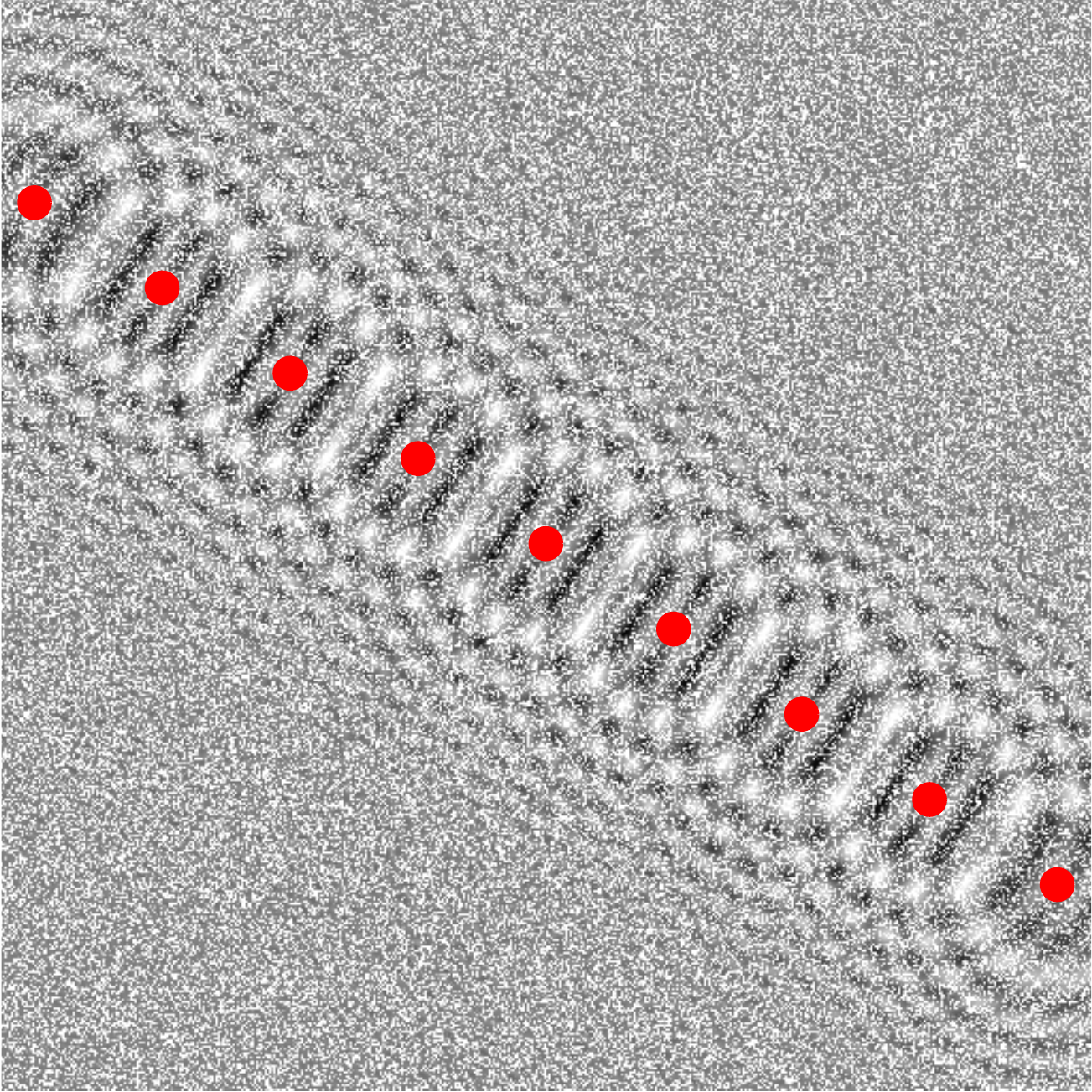}
  \end{tabular}
  \caption{Simulated holograms of spherical particles.
           In the left column the noiseless signals $v$ are displayed.
           For reconstruction the noisy signals $v^\varepsilon$
           of the right column are used.
           The dots correspond to the location  of detected particles with OMP.
           The algortihm recovered all particles exactly,
           however, the condition of proposition~\ref{theorem_fresnel}
           was just fulfilled for the image on top right.
           In the image in the middle the particles have
           a too small distance to each other
           and at the bottom the image was manipulated with unrealistically high noise.}
  \label{bild_holo}
\end{figure}

\section{Conclusion and Future Prospects}
In this paper we gave exact recovery conditions for the orthogonal
matching pursuit for noisy signals that work without the concept of
coherence. Our motivation was to treat ill-posed problems, and in
particular, two problems of convolution type.  We obtained results on
exact recovery of the support for noiseless and even noisy data.
Moreover, for noisy data there is a simple error bound in
proposition~\ref{prop_error_bound_omp} which shows a convergence rate
of $\bigO(\varepsilon)$. The rate of convergence resembles what is known
for sparsity-enforcing regularization with $\ell^p$ penalty term for
$0<p\leq
1$~\cite{grasmair2008sparseregularization,grasmair2008pleq1,Bredies2009b},
moreover, our results also guarantee the exact recovery of the
support.  

In two real-world applications we showed that these condition lead to
computable conditions and hence, are practically relevant. A main tool
here was, that the atoms in the dictionary are shifted copies of the
same shape and that the correlation of the atom depends on the
distance of the atoms only. Once there is a sufficiently decaying
upper bound for the correlation, we are able to apply the Neumann
ERC~\eqref{eq_suf} and the Neumann
$\varepsilon$ERC~\eqref{eq_sufnoise} and obtain computable conditions
for exact recovery as illustrated in the examples in
section~\ref{sec_massspectro} and~\ref{sec_holography}.
However, experiments indicated that the
conditions for exact recovery from theorem~\ref{theorem_suf}
and~\ref{theorem_sufnoise} are too restrictive.  An idea to come to a
tighter exact recovery condition is to bring in more prior knowledge,
as e.g.~a non-negativity constraint, cf.~\cite{Bruckstein2008}.  We
postpone this idea for future work. For the particle digital
holography application even more prior knowledge may be taken into
account, since the objects are not only non-negative but also all
apertures have the same denseness, i.e.~$\alpha_i$ is constant for all
$i\in I$.

As discussed in remark~\ref{rem_continuous_omp}, a straightforward
generalization of our approach to fully continuous dictionaries runs
into problems. Especially it seems that there is little hope to obtain
exact recovery of the support, but maybe one may obtain bounds on how
accurate the support is localized. This is strongly related to the
structure of the dictionary (e.g.~that is consists of shifts of the
same object) and of course related to the correlations.

Finally, a further direction of research may be to investigate other
types of pursuit algorithms like regularized orthogonal matching
pursuit~\cite{Needell2009} or CoSAMP~\cite{Needell2009b}.

\ack{Dirk Lorenz acknowldges support by the DFG under grant LO
  1436/2-1 within the Priority Program SPP 1324 ``Extraction of
  quatitative information in complex systems''. Dennis Trede
  acknowledges support with the BMBF project INVERS under grant
  03MAPAH7.}

\section*{References}

\bibliographystyle{plain}
\bibliography{literatur}
\end{document}